\tikzstyle{lien}=[->,>=stealth,rounded corners=5pt,thick]
\newtheorem{thm}{Theorem}[section]
\newtheorem{prop}[thm]{Proposition}
\newtheorem{lem}[thm]{Lemma}
\theoremstyle{definition}
\newtheorem{defi}[thm]{Definition}
\theoremstyle{remark}
\newtheorem{rem}{Remark}[section]
\numberwithin{equation}{section}
\author{Julien Moy}
\title[$C^{1,\alpha}$ regularity of hypersurfaces of bounded NMC]{$ C^{1,\alpha}$ regularity of hypersurfaces of bounded nonlocal mean curvature in Riemannian manifolds}
\address{Département de Mathématiques et Applications, Ecole Normale Supérieure, Paris}
\email{julien.moy@ens.psl.eu}
\begin{document}

\begin{abstract}Let $(M,g)$ be a smooth connected Riemannian manifold. We show an improvement of flatness theorem for hypersurfaces of $M$ of bounded nonlocal mean curvature in the viscosity sense. It implies local $ C^{1,\alpha}$ regularity of these hypersurfaces provided that they are sufficiently flat. It extends a result of Caffarelli, Roquejoffre and Savin in the Euclidean setting to the case of arbitrary manifolds. \end{abstract}

\maketitle

\tableofcontents
		\section{Introduction}

	Nonlocal minimal surfaces were first introduced by Caffarelli, Roquejoffre and Savin in the seminal paper \cite{CRS}. These surfaces arise in the study of phase transition models with long range interactions. They are defined as boundaries of minimizers of a certain energy functional called the \textit{fractional perimeter}, depending on some parameter $s\in(0,1)$. The authors obtained a first regularity result: nonlocal minimal surfaces of $\mathbf R^n$ are $C^{1,\alpha}$, outside a closed singular set of Hausdorff dimension $n-2$.
	
	Caselli, Florit-Simon and Serra recently introduced in \cite{SSC} a notion of fractional perimeter for subsets of an arbitrary Riemannian manifold $(M,g)$. In their terminology, the boundary $\partial E$ of a subset $E\subset M$ is an \emph{$s$-minimal surface} if $E$ is a critical point of the fractional perimeter. They showed that closed manifolds of dimension $n\ge 2$ contain infinitely many of these surfaces (analogue in the fractional setting to the well known Yau's conjecture on classical minimal surfaces, recently proved in full generality by Song \cite{Song23}). It is argued in \cite{Cha+} that one should be able to construct classical minimal surfaces by taking limits of $s$-minimal surfaces, as the fractional parameter $s$ goes to $1$. Thus, nonlocal minimal surfaces may offer new ways to tackle questions about classical minimal surfaces.

	We focus here on a seemingly broader class of surfaces: those with bounded nonlocal mean curvature (NMC) in the viscosity sense (see Definition \ref{viscosity solution}). Our goal is to extend the improvement of flatness theorem for hypersurfaces of $\mathbf R^n$ with zero NMC proved in \cite{CRS}, to the case of hypersurfaces of bounded NMC in a Riemannian manifold. We stress out that our result holds in any smooth, connected, Riemannian manifold.
	\subsection{Fractional perimeter and nonlocal minimal surfaces in $\mathbf{R}^n$} We first describe the standard setup before diving into the setting of general manifolds. Let $s\in (0,1)$, that will be fixed in the paper. Given a measurable function $f:\mathbf R^n\to \mathbf R$, the (homogeneous) $H^{s/2}$-norm of $f$ can be written as 
	\[\|f\|_{H^{s/2}(\mathbf R^n)}^2=\int \!\!\int_{\mathbf R^n\times \mathbf R^n}\frac{|f(x)-f(y)|^2}{|x-y|^{n+s}}\ \mathrm dx\mathrm dy.\]
	We define the \emph{$s$-perimeter} of a measurable set $E\subset \mathbf{R}^n$ by
	\begin{equation}\label{eq:defpers}\mathrm{Per}_s(E):=\|\chi_E\|_{H^{s/2}(\mathbf R^n)}^2,\end{equation}
	where $\chi_E$ is the indicator function of the set $E$. This can be thought of as an energy taking into account long range interactions between points of $E$ and its complement $\mathcal CE$. Of course, without restriction on $E$, this quantity may be infinite. Assuming that $E$ is bounded with $C^2$ boundary is sufficient to guarantee convergence of \eqref{eq:defpers}. We point out that in the limit $s\uparrow 1^{-}$, the fractional perimeter $(1-s)\mathrm{Per}_s(E)$ converges, up to a multiplicative factor, to the usual perimeter $\mathrm{Per}(E)$, at least when $\partial E$ is smooth, see \cite{Dav}. Although half spaces have infinite $s$-perimeter, we still want them to minimize the $s$-perimeter in some sense. Thus, given an open subset $\Omega\subset \mathbf{R}^n$, we define the relative perimeter $\mathrm{Per}_s(E;\Omega)$ as the contribution of $\Omega$ to the $H^{s/2}$-norm of $\chi_E$, that is we only consider the contribution of pairs of points $(x,y)$ where at least one of them belongs to $\Omega$:
	\[\mathrm{Per}_s(E;\Omega):=\int \!\!\int_{\mathbf{R}^n\times \mathbf{R}^n\setminus \mathcal C\Omega\times \mathcal C\Omega}\frac{|\chi_E(x)-\chi_E(y)|}{|x-y|^{n+s}}\ \mathrm dx\mathrm dy.\]
	We can then define minimizers.
	\begin{defi}Given an open subset $\Omega\subset \mathbf R^n$, we say that a measurable subset $E\subset \mathbf R^n$ is a \emph{minimizer} of the $s$-perimeter in $\Omega$ if $\mathrm{Per}_s(E;\Omega)$ is minimal among sets which coincide with $E$ outside $\Omega$. In other words, $E$ is a minimizer in $\Omega$ if and only if for any subset $F\subset \mathbf{R}^n$ such that $F\cap \mathcal C\Omega=E\cap \mathcal C\Omega$ we have
		\[\mathrm{Per}_s(E;\Omega)\le \mathrm{Per}_s(F;\Omega).\]\end{defi}
	
	 Following the usual terminology, a point $x$ is said to belong to the \emph{interior} of $E$ if $|B_r(x)\backslash E|=0$ for some $r>0$. Up to modifying $E$ by a set of measure $0$, we shall always assume that $E$ contains its interior. Also, $\partial E$ is defined as the set of points $x$ such that $|E\cap B_r(x)|>0$ and $|\mathcal C E \cap B_r(x)|>0$ for any $r>0$. We point out that the interior of $E$ is open, while the boundary $\partial E$ is closed.
	
	The result of \cite{CRS} relies on an \textit{improvement of flatness} theorem for subsets of $\mathbf R^n$ of zero nonlocal mean curvature for the Euclidean metric in the viscosity sense (see \cite{CRS} for the definition in the Euclidean setup). This applies in particular to minimizers, by \cite[Theorem 5.1]{CRS}.

	\begin{thm}[Caffarelli--Roquejoffre--Savin {\cite{CRS}}]\label{thmCRS} Let $0<\alpha<s$. There exists $\sigma>0$, that may depend on $n,s$ and $\alpha$, such that the following holds. Assume $E\subset \mathbf{R}^n$ has zero nonlocal mean curvature in the viscosity sense in $B_1(0)$. Moreover, assume that $0\in \partial E$ and that $\partial E\cap B_1(0)$ is trapped in a $\sigma$-flat cylinder in the direction $x^n$ in $B_1(0)$, meaning
		\begin{equation}\label{eq: assumption mainthm 1}\{x^n \le -\sigma\}\cap B_1(0)\subset E \cap B_1(0)\subset \{x^n\le \sigma\}.\end{equation}
		Then $\partial E\cap B_{1/2}(0)$ is a $ C^{1,\alpha}$ graph in the $x^n$ direction.
	\end{thm}
	
	\begin{rem}\label{rem: uniform density } Minimizers enjoy some \emph{uniform density estimates}: there is a universal constant $c_\star$ such that if $0\in \partial E$ and $E$ is a minimizer of the $s$-perimeter in $B_r(0)$, then $|E\cap B_r(0)|\ge c_\star r^n$ and $|\mathcal CE\cap B_r(0)|\ge c_\star r^n$. Therefore, taking $\sigma$ small enough, one can replace the assumption \eqref{eq: assumption mainthm 1} by the \emph{a priori} weaker condition $0\in \partial E$ and $\partial E\cap B_1\subset \{|x^n|\le \sigma\}$ (this is how the Theorem is stated in \cite{CRS}). It is not clear however that surfaces of bounded or even zero NMC satisfy such estimates.\end{rem}
	In \cite{CaffaValdi}, if~$E$ is a minimizer, the authors managed to release the dependence of~$\sigma$ on the parameter~$s$ as~$s\uparrow 1$, hence obtaining a uniform improvement of flatness theorem. For $s$ close to~$1$, they showed that the singular set has Hausdorff dimension $n-8$. This result was improved in \cite{Fig}, where the authors showed smoothness of minimizers for~$s$ close to $1$, outside the singular set.
	
	\subsection{Nonlocal minimal surfaces in a Riemannian manifold} Let $(M,g)$ be a smooth, connected, orientable, $n$-dimensional Riemannian manifold. Based on \eqref{eq:defpers}, we aim to define the fractional perimeter of a subset $E\subset M$ as the $\frac s2$-Sobolev norm (this has yet to be defined) of the indicator function of $E$.
	
	\subsubsection{Fractional Sobolev spaces on Riemannian manifolds} In order to motivate the definition of the $H^{s/2}(M)$-norm, we first introduce the fractional Laplacian $\Delta^{s/2}$ on the manifold $(M,g)$. We point out that the following construction is valid for $s\in (0,2)$, although in the setting of the paper we will stick to $s\in (0,1)$. 
 
 Let $C^\infty_{\rm comp}(M)$ denote the space of smooth, compactly supported functions on $M$, and denote by $H_0^1(M)$ the completion of $C^\infty_{\rm comp}(M)$ for the norm
 \[\|f\|_{H_0^1(M)}:=\int_M |f|^2+|\nabla_g f|^2\mathrm dV.\]
Here $\nabla_gf $ denotes the gradient of $f$, and $\mathrm dV$ (that will sometimes be denoted $\mathrm dV_g$) is the Riemannian volume form on $M$. We define a dense subset of $L^2(M)$ by setting
\[\mathcal D=\{f\in H_0^1(M),~ \Delta f\in L^2\}.\]
Then, the Laplacian $\Delta_{|\mathcal D}$ is a nonnegative self-adjoint operator, called the \emph{Dirichlet Laplacian}. Note that when $(M,g)$ is geodesically complete, the operator $\Delta_{|C^\infty_{\rm comp}(M)}$ is essentially self-adjoint, \emph{i.e.} admits a unique self-adjoint extension. When $(M,g)$ is noncomplete, such an extension is not unique and one has to consider an initially larger domain. In the following, we just write $\Delta$ for the Dirichlet Laplacian, or $\Delta_g$ to stress dependence on the metric.
	
To define the fractional Laplacian, we start from the representation formula, valid for $\lambda>0$,
	\begin{equation}\label{lambdasigma}\lambda^{s/2}=\frac{1}{|\Gamma(-\frac s2)|}\int_0^{+\infty}(1-\mathrm{e}^{-\lambda t})\frac{\mathrm dt}{t^{1+s/2}}.\end{equation}
	Then, we formally define $\Delta^{s/2}$ by
	\begin{equation}\label{Bochnerdef} \Delta^{s/2}:=-(-\Delta)^{s/2}=\frac{1}{|\Gamma(-\frac s2)|}\int_0^{+\infty}(\mathrm{e}^{t\Delta}-\mathrm{Id})\frac{\mathrm dt}{t^{1+s/2}}.\end{equation}
	Here $\mathrm{e}^{t\Delta}$ denotes the \emph{heat semigroup}, see \cite[\S 4.3]{Grigorbook}. It is initially defined via functional calculus as an operator on $L^2(M)$, but it can also operate on the space of $C^\infty$-bounded functions, see \cite[Chapter 7]{Grigorbook}.
	
	We denote by $H(t,p,q)$ the \emph{heat kernel} on $M$, that is as usual the smallest fundamental solution to the heat equation on $M$. We may write $H_g$ instead of $H$ to stress the dependence on the metric when needed. Then, if $f:M\to \mathbf R$ is a smooth bounded function, we have
	\begin{equation}\label{semigroupe}\mathrm{e}^{t\Delta} f(p)=\int_{M} H(t,p,q)f(q)\mathrm dV(q).\end{equation}

	We recall that the function $H(t,p,q)$ is smooth and positive on $(0,+\infty)\times M\times M$, symmetric in space (meaning $H(t,p,q)=H(t,q,p)$) and satisfies
	\begin{itemize}\item $\displaystyle\frac{\partial H(t,p,q)}{\partial t} =\Delta H(t,p,q), \ \ \ \text{for all positive time $t$.}$
		\item $\displaystyle\int_M H(t,p,q)f(q)\mathrm dV(q)\underset{t\to 0}\longrightarrow f(p)$, \ \ \ \text{for any bounded function $f\in  C^{\infty}(M)$.}
	\end{itemize}
	The second condition is often rephrased by saying that $H(t,p,\cdot)\mathrm dV\to \delta_p$ as $t$ goes to $0$. For a general introduction to heat kernels on manifolds, we refer to \cite[Chapters 4,7--9]{Grigorbook}. 
	
	\begin{defi}[Stochastic completeness] We say that $(M,g)$ is \emph{stochastically complete} if the heat kernel $H(t,p,q)$ satisfies
		\[\int_M H(t,p,q)\mathrm dV(q)=1\]
		for any time $t>0$ and $p\in M$. \end{defi}
	\begin{rem}If $(M,g)$ is not stochastically complete, then the fractional Laplacian may have a wild behavior. In particular, the fractional Laplacian of a constant does not vanish. Nevertheless, stochastic completeness holds for a large class of manifolds. Indeed, by a result of Grigor'yan \cite{Grigoryan}, if for some $p\in M$ one has $\mathrm{Vol}(B_r^g(p))\le C \mathrm{e}^{ cr^2}$ then $(M,g)$ is stochastically complete. \end{rem}
	
	Let us go back to our representation formula. When $(M,g)$ is stochastically complete and $f:M\to \mathbf R$ is a smooth bounded function we can write
	\[(\mathrm{e}^{t\Delta}-I)f(p)=\int_{M} H(t,p,q)\big(f(q)-f(p)\big)\mathrm dV(q).\]
	Integrating against $\frac{\mathrm dt}{t^{1+s/2}}$ yields
	\begin{equation}\label{eq: fracLap pv}-(-\Delta)^{s/2} f(p)=\mathrm{p.v.}\int_M K(p,q)\big(f(q)-f(p)\big) \mathrm dV(q),\end{equation}
	where $K(p,q)$, that we refer to as the kernel of the fractional Laplacian, is defined by
	\begin{equation}\label{defKpq} K(p,q):=\int_{\mathbf R_+} H(t,p,q)\frac{\mathrm dt}{t^{1+s/2}}.\end{equation}
	The principal value in \eqref{eq: fracLap pv} has to be understood as the limit of integrals over $M\setminus B^g_\varepsilon(p)$ (here $B^g_\varepsilon(p)$ is the metric ball of radius $\varepsilon$ centered at $p$), when $\varepsilon$ goes to $0$. Again, we may write $K_g(p,q)$ instead of $K(p,q)$ to stress the dependence on the metric. Note that in the case $(M,g)=(\mathbf{R}^n,\mathrm{eucl})$ we recover $K(x,y)=\alpha_{n,s} |x-y|^{-(n+s)}$, where $\alpha_{n,s}$ is an explicit positive constant.
	
	Note that the right-hand side of \eqref{eq: fracLap pv} makes sense on any manifold, even when $M$ is not stochastically complete, though in this case it does not coincide with \eqref{Bochnerdef}. Hence, stochastic completeness is a rather natural hypothesis in our setting, even though our results do not require it. 
	
	We refer to \cite{caselli2024fractional,BGS} for other equivalent definitions of the fractional Laplacian on manifolds, either based on a spectral approach on closed manifolds, or on a Caffarelli--Silvestre extension problem. These however aren't used in our paper.

	\begin{rem}[Change of variable and rescaling] \label{rem:scaling} It will be useful to keep in mind that if $(M,g)$ is a Riemannian manifold and $\varphi:N\to M$ is a smooth diffeomorphism, then $H_{\varphi^* g}(t,\varphi^{-1}(p),\varphi^{-1}(q))=H_g(t,p,q)$, and if $\lambda>0$ then $H_{\lambda^2 g}(t,p,q)=\lambda^{-n}H_g(t/\lambda^2,p,q)$. Consequently, we also have $K_{\varphi^* g}(\varphi^{-1}(p),\varphi^{-1}(q))=K_g(p,q)$ and $K_{\lambda^2 g}(p,q)=\lambda^{-(n+s)} K_g(p,q)$.  \end{rem}

	\begin{defi}
		We define the $H^{s/2}$-norm of a measurable function $f:M\to \mathbf R$ by
		\begin{equation}\label{eqdefSobolevnorm}\|f\|_{H^{s/2}(M)}^2:=\int\!\!\int_{M\times M}\big(f(p)-f(q)\big)^2K(p,q) \mathrm dV(p)\mathrm dV(q).\end{equation}\end{defi}
	Notice that, thanks to the symmetry property $K(p,q)=K(q,p)$, if $(M,g)$ is stochastically complete then $\|f\|_{H^{s/2}(M)}^2=\langle f,(-\Delta)^{s/2}f\rangle_{L^{2}(M)}$ for any smooth compactly supported function $f$. 
	\subsubsection{Fractional perimeter, $s$-minimal sets and nonlocal mean curvature.}
	
	Similarly to the Euclidean setting, with \eqref{eqdefSobolevnorm} at hand, we define the $s$-perimeter of a measurable subset $E\subset M$ by
	\[\mathrm{Per}_s(E):= \|\chi_E\|_{H^{s/2}(M)}^2=\int\!\!\int_{M\times M} K(p,q)|\chi_E(p)-\chi_E(q)|\mathrm dV(p)\mathrm dV(q).\]
	
	Now we define $s$-minimal sets, and nonlocal mean curvature of subsets of arbitrary manifolds.
	
	\begin{defi}[Nonlocal minimal hypersurfaces] Assume $(M,g)$ is a closed manifold. We say that a measurable subset $E\subset M$ is \emph{$s$-minimal} if its fractional perimeter is finite and if it is a critical point of the fractional perimeter functional, that is, for any $C^\infty$ vector field $X$ on $M$ we have
		\[\left.\frac{\mathrm d}{\mathrm dt}\right|_{t=0}\mathrm{Per}_s(\phi_X^t(E))=0,\]
		where $\phi_X^t$ denotes the flow of $X$. The boundary $\partial E$ is called a \emph{$s$-minimal hypersurface}. 
	\end{defi}
	
	\begin{rem} As pointed out in \cite[Remark 1.8]{SSC}, if $\mathrm{Per}_s(E)<\infty$ then the map $t\mapsto \mathrm{Per}_s(\phi_X^t(E))$ is smooth, thus the definition above makes sense. There is a local version of this definition, that proves useful when working in noncompact manifolds, by considering perturbations of $E$ in an open subset $\Omega$, see \cite[\S 1.3]{SSC} for details.\end{rem}
	
	\begin{defi}[Nonlocal mean curvature] Consider a measurable subset $E\subset M$. If $p$ belongs to $\partial E$ we define (when it makes sense) the \emph{nonlocal mean curvature} (NMC) of $E$ at $p$ by the formula
		\begin{equation}\label{eq: def NMC} \mathrm H_s[E](p):=\mathrm{p.v.}\int_M\big(\chi_E(q)-\chi_{\mathcal CE}(q)\big) K(p,q)\mathrm dV(q),\end{equation}
		where the principal value is understood as the limit as $\varepsilon\to 0$ of integrals over $M\backslash B_\varepsilon^g(p)$, where $B_\varepsilon^g(p)$ denotes the metric ball of radius $\varepsilon$ centered at $p$. We may write $\mathrm{H}_s^g[E]$ for the NMC to stress the dependence on the metric. 
		
	\end{defi}
	
	\begin{rem}[Rescaling and invariance by isometry]\label{rem: scaling H_s} By Remark \ref{rem:scaling} we immediately get that if $E$ is a measurable subset of $(M,g)$ then for any $\lambda>0$ we have $\mathrm H_s^{\lambda^2 g}[E](p)=\lambda^{-s}\mathrm H_s^g[E](p)$; and if $\varphi:N\to M$ is a smooth diffeomorphism, then $\mathrm H_s^{\varphi^*g}[\varphi^{-1}(E)](\varphi^{-1}(p))=\mathrm H_s^g[E](p)$.\end{rem}

	Similarly to the Euclidean case, we say that $E$ is a \emph{minimizer} in $\Omega$ if $\mathrm{Per}_s(E;\Omega)\le \mathrm{Per}_s(F;\Omega)$ for any subset $F\subset M$ such that $F\cap \mathcal C\Omega=E\cap \mathcal C\Omega$. On closed manifolds, one can show that minimizers are $s$-minimal sets, but the converse is false, even for stable $s$-minimal sets \cite[\S 1.1]{Cha+}. It can be shown for smooth sets that nonlocal mean curvature arises as the first variation of the fractional perimeter functional. In particular, $s$-minimal smooth sets have zero nonlocal mean curvature. However, when no information on the regularity of $\partial E$ is known, $\mathrm H_s[E]$ is \emph{a priori} not well-defined, and we need to introduce nonlocal mean curvature in a weak sense. 
	
	\subsection{NMC boundedness in the viscosity sense}

	\begin{defi}Following the terminology of \cite{SSC}, we say that a measurable subset $E\subset M$ has an interior tangent ball at $p\in \partial E$ if there exists a smooth diffeomorphism $\psi$ from $B_1(0)\subset \mathbf{R}^n$ onto a neighborhood $V$ of $p$ in $M$, such that $\psi(0)=p$ and $V^+:=\psi(B_1^+)\subset E$, where $B_1^+=\{(x',x^n)\in B_1(0), \ x^n>0\}$. 
 
 Similarly, we say that $E$ has an exterior tangent ball at $p\in \partial E$ if there exists such a $\psi$ with instead $V^-:=\psi(B_1^-)\subset \mathcal CE$, where $B_1^{-}=\{(x',x^n)\in B_1(0),  \ x^n<0\}$. \end{defi}
	
	A notable feature is that a measurable subset $E\subset M$ has well-defined NMC at $p\in \partial E$ whenever it has an interior or exterior tangent ball at $p$. In the first case, one can show that~$\mathrm H_s[E](p)\in (-\infty,+\infty]$, while in the second case ${\mathrm H_s[E](p)\in [-\infty,+\infty)}$. This fact is not entirely trivial and is the content of Proposition \ref{prop: NMC bien définie} for manifolds that are quasi-isometric to $\mathbf R^n$ (with control on the first derivatives of the metric coefficients), and Proposition \ref{prop: NMC bien définie bis} in the general case. A proof for $(M,g)=(\mathbf R^n,\mathrm{eucl})$ can be found in the work of Cabré \cite{cabré}, who also simplified the proof that minimizers of the fractional perimeter have zero NMC in the viscosity sense (for the Euclidean metric).

	Now we can define sets with bounded nonlocal mean curvature in the viscosity sense.

	\begin{defi}\label{viscosity solution}Let $E$ be a measurable subset of $(M,g)$, and let $\Omega\subset M$. We say that $E$ has NMC bounded by $C_0$ \emph{in the viscosity sense} in $\Omega$ if and only if the two following conditions are satisfied:
		
		\begin{itemize}\item whenever $E$ as an interior tangent ball at $p\in \partial E\cap \Omega$, we have $\mathrm H_s[E](p)\le C_0$.
			\item whenever $E$ has an exterior tangent ball at $p\in \partial E\cap \Omega$, we have $\mathrm H_s[E](p)\ge -C_0$.
		\end{itemize}
	\end{defi}
	
	There is an equivalent definition of NMC boundedness used in \cite{SSC}, that may appear weaker at first but yields the same notion. We postpone the proof of the equivalence to \S \ref{subsec: NMC bis}.
	
	\begin{prop}\label{prop: equiv def NMC}A measurable subset $E\subset M$ has NMC bounded by $C_0$ in the viscosity sense in $\Omega$ if and only if for any smooth diffeomorphism $\psi:B_1(0)\subset \mathbf R^n\to V\subset M$ such that $\psi(0)=p\in \partial E\cap \Omega$, letting $F=V^+\cup (E\backslash V)$, the two following conditions are verified:
		
		\begin{itemize} \item if $V^+:=\psi(B_1^+)\subset E$, then  $\mathrm{H}_s[F](p)\le C_0$.
			\item if $V^-:=\psi(B_1^-)\subset \mathcal CE$, then $\mathrm{H}_s[F](p)\ge -C_0$. \end{itemize}\end{prop}

	We expect that minimizers, in our setup, have zero nonlocal mean curvature in the viscosity sense (\emph{i.e.} have NMC bounded by $0$ in the viscosity sense in our terminology). The result should follow by adapting the calibration argument of Cabr\'e \cite{cabré}.
	
	 Note that since the nonlocal mean curvature arises as the first variation of the fractional perimeter, if $E$ has smooth boundary, then $E$ is $s$-minimal if and only if $E$ has zero NMC. However, without \emph{a priori} regularity assumptions on $\partial E$, the equivalence above seems more mysterious (replace $E$ has zero NMC by $E$ has zero NMC in the viscosity sense), as the method of \cite{cabré} doesn't seem to adapt easily to sets that are merely critical points of the fractional perimeter. Yet, let us mention the following result: the authors of \cite{SSC} showed that any closed manifold $(M,g)$ contains infinitely many $s$-minimal hypersurfaces, constructed by taking limits as $\varepsilon\to 0^+$ of solutions with bounded Morse index to the fractional Allen--Cahn equation. The hypersurfaces that they obtained have zero NMC in the viscosity sense. However, it is not known whether any $s$-minimal hypersurface can be obtained in such a way, hence the question remains open.

	
	\subsection{Main results} 
	
	We are now ready to state our main theorem.

	\begin{thm} \label{mainthm} Let $0<\alpha<s<1$ and $C_0>0$. There exists positive constants $\sigma$ and $C_1$ that may depend on $n,s,\alpha$ and $C_0$, such that the following holds. Let $r>0$. Let $g=(g_{ij})$ be a smooth Riemannian metric on $\mathbf{R}^n$, and $E$ be a measurable subset of $\mathbf R^n$, such that
			
			\begin{itemize}
				\item $\frac 12 \le g\le 2$ (in the sense of quadratic forms) and $r\|\nabla g_{ij}\|_{L^\infty(\mathbf R^n)}\le 1$ for any~${i,j\in \{1,\ldots,n\}}$.
				\item The point $0$ belongs to $ \partial E$, and the boundary $\partial E$ is trapped in a $\sigma$-flat cylinder in the direction $x^n$ in $B_r(0)$, meaning
				\begin{equation}\label{hypothèse trapping}\{x^n\le -\sigma r\}\cap B_r(0)\subset E\cap B_r(0)\subset \{x^n\le \sigma r\}.\end{equation}
				\item The set $E$ has nonlocal mean curvature bounded by $C_0r^{-s}$ in the viscosity sense (for the metric $g$) in the Euclidean ball $B_r(0)$.
			\end{itemize}
			Then $\partial E$ is a $ C^{1,\alpha}$ graph in the direction $x^n$ in $B_{r/2}'(0)\times [-\sigma r,\sigma r]$ with uniform estimates, meaning that
			\[\partial E\cap \big(B_{r/2}'(0)\times [-\sigma r,\sigma r]\big)=\big \{(x',f(x')), \  x'\in B_{r/2}'(0)\big \},\]
			for some $C^{1,\alpha}$ function $f:B_{r/2}'(0)\to [-\sigma r,\sigma r]$ satisfying
			\[\|\nabla f\|_{C^\alpha(B_{r/2}'(0))}\le C_1r^{-\alpha}.\]\end{thm}
	
	\begin{rem}The fact that we ask bounds of NMC by $C_0r^{-s}$ is no surprise since the nonlocal mean curvature scales like $r^{-s}$ when we rescale the metric by a factor $r$, as follows from Remark \ref{rem: scaling H_s}. \end{rem}

	As we will be dealing with subsets $E$ of general Riemannian manifolds, we need an analogous assumption to the trapping hypothesis \eqref{hypothèse trapping}. It is not clear however to see what it means for a subset $E\subset M$ to be trapped in a $\sigma$-flat cylinder in the metric ball $B_r^g(p)$, when $r$ is arbitrary large. However, when $r$ is small enough, we can always map $B_r^g(p)$ quasi-isometrically to the Euclidean ball $B_r(0)$, \emph{e.g.} by taking Riemannian normal coordinates, and consider cylinders in these coordinates.

	From the previous discussion, let us introduce the following definition, from \cite{SSC}.
	\begin{defi}[Local flatness assumption]\label{Local flatness} A manifold $(M,g)$ satisfies a \emph{flatness assumption} of order $1$ at scale $r$ around $p$, with parametrization $\varphi$, denoted $\mathrm{FA}_1(M,g,p,r,\varphi)$, if there exists a smooth diffeomorphism $\varphi:B_r(0)\subset \mathbf R^n\to V\subset M$ with $\varphi(0)=p$, such that if $\varphi^*g=(g_{ij})$ denotes the pullback metric, then $\frac 12\le \varphi^*g\le 2$ in $B_r(0)$, and for any $i,j\in\{1,\ldots,n\}$ we have
		\[r \|\nabla g_{ij}\|_{L^{\infty}(B_r(0))}\le 1.\]\end{defi}
	\begin{rem}We point out that for any $p\in M$, it is always possible to find a small radius $r$ and a diffeomorphism $\varphi$ such that $\mathrm{FA}_1(M,g,p,r,\varphi)$ holds, \emph{e.g.} by taking for $\varphi$ the exponential map $\exp_p:B_r(0)\subset \mathbf R^n\to B_r^g(p)\subset M$ for $r$ small enough (here we fix an identification between $\mathbf R^n$ and the tangent space $T_pM$). \end{rem}
	
	\begin{rem}There are two notions of flatness at play here. Firstly, we deal with flatness of cylinders in $\mathbf R^n$, defined as the ratio between the height and the diameter of the base. Secondly, roughly speaking, a manifold $M$ satisfies a flatness assumption of order $1$ at scale $r$ around $p$ if $B_r^g(p)$ is quasi-isometric to the Euclidean ball $B_r(0)$ (with additional control on first derivatives of the metric coefficients). Since these notions apply to distinct objects, we hope that it causes no confusion to the reader. \end{rem}
	
	Now we can formulate our second result, that extends Theorem \ref{mainthm} to the setting of arbitrary manifolds.
	
	\begin{thm}\label{maincor} Let $0<\alpha<s<1$ and $C_0>0$. There exists positive constants $\sigma$ and $C_1$ depending on $C_0,n,s$ and $\alpha$ such that the following holds. Assume $(M^n,g)$ is a smooth $n$-dimensional Riemannian manifold satisfying a flatness assumption $\mathrm{FA}_1(M,g,p,r,\varphi)$ for some $p\in M$, and $E\subset M$ satisfies:
		\begin{itemize} 
			\item  $E$ has nonlocal mean curvature (for the metric $g$) bounded by $C_0r^{-s}$ in the viscosity sense in $\varphi(B_r(0))$.
			\item  The point $p$ belongs to $ \partial E$ and $\varphi^{-1}(\partial E)$ is trapped in a $\sigma$-flat cylinder in $B_r(0)$, i.e. there is some $\nu\in \mathbf S^{n-1}$ such that
			\[\{x\cdot \nu \le -\sigma r\}\cap B_r(0)\subset \varphi^{-1}(E)\cap B_r(0)\subset \{x\cdot \nu \le \sigma r\}.\] 
		\end{itemize}
		Then, $\varphi^{-1} (\partial E)$ is the graph of a $C^{1,\alpha}$ function $f$ in the direction $\nu$ in $B_{r/2}'(0)\times [-\sigma r,\sigma r]$, with \[\|\nabla f\|_{C^\alpha(B_{r/2}'(0))}\le C_1 r^{-\alpha}.\]
	\end{thm}
	
	
	\subsection{Notations}
	If $\bullet$ is a set of parameters, we use the notations $C_\bullet,c_\bullet$ to denote positive constants depending only on $\bullet$, that may change from line to line. We may write for example $C_{n,s}$ for a constant depending only on $n,s$.
	
	To avoid heavy notations, if $A,B$ and $\Omega$ are three sets, we say that $A\subset B$ in $\Omega$ if and only if $A\cap \Omega\subset B\cap \Omega$.
	
	We will often denote points of $\mathbf{R}^n$ by $x=(x',x^n)\in \mathbf{R}^{n-1}\times \mathbf{R}$. We denote by $B_r'(x)\subset \mathbf R^{n-1}$ the ball $\{|y'-x'|<r\}$.
	
	If $g$ is a metric on $\mathbf R^n$, we write shortly $|g|:=|\det g|$, so that the volume form $\mathrm dV_g(z)$ is given by $\sqrt{|g(z)|}\mathrm dz$.
	
	If $(M,g)$ is a Riemannian manifold, the metric ball of radius $r$ centered at $p$ is denoted by $B_r^g(p)$. We drop the superscript when $g$ is the Euclidean metric on $\mathbf R^n$.
	
	If $M$ is a manifold and $\varphi$ is a  smooth diffeomorphism from $B_R(0)\subset \mathbf R^n$ to a subset $V\subset M$, we denote $V_r(q):=\varphi(B_r(\varphi^{-1}(q)))$ when it makes sense. In particular, if $\varphi(0)=p$ then $V_r(p)=\varphi(B_r(0))$ and $V_R(p)=V$.

	\subsection{Organization of the paper}

		In Section \ref{Euler Lagrange equation} we start by recalling pointwise Gaussian upper bounds on heat kernels associated to metrics on $\mathbf R^n$ that are comparable to the Euclidean metric. These are standard and follow for example from the fact that such manifolds satisfy a \emph{relative Faber--Krahn inequality}. Then, we provide a crucial integral estimate on the difference $K_g(y,\cdot)\sqrt{|g|}-K_{g(y)}(y,\cdot)\sqrt{|g(y)|}$ when $g$ satisfies the assumptions of Theorem \ref{mainthm}.
		\vspace{3pt}
		
		In Section \ref{sec:freeze}, we establish that if $E\subset \mathbf R^n$ has finite NMC at some point $y\in \partial E$ for the metric $g$, then it has finite NMC at the point $y$ for the constant metric $g(y)$. The proof relies on the estimates of the previous section. It allows rephrasing the NMC boundedness assumption only in terms of constant metrics. Working with constant metrics offers the benefit to deal with kernels that are invariant under the symmetry $x\mapsto 2y-x$. This property allows in turn to exploit the hypothesis that $\partial E$ is trapped in flat cylinders to obtain cancellations when estimating integrals in the proof of the improvement of flatness Theorem (Theorem \ref{improvement}).
		\vspace{3pt}
		
		Section \ref{Improvement of flatness} is devoted to the proof of the improvement of flatness theorem for hypersurfaces of $(\mathbf R^n,g)$ of bounded NMC in the viscosity sense, when $g$ satisfies the assumptions of Theorem \ref{mainthm}. The proof follows closely that of \cite{CRS}. However, some subtleties arise because we are dealing with nonconstant metrics and sets of bounded instead of zero NMC, hence the necessity to work out again the proof. A standard argument then allows deducing Theorem \ref{mainthm} from Theorem \ref{improvement}.
		\vspace{3pt}
		
		In Section \ref{fromto} we go back to the setting of an arbitrary Riemannian manifold $(M,g)$. We prove some integral estimates for the heat kernel $H(t,p,\cdot)$ when $M$ satisfies a flatness assumption at scale $r$ around $p$. These estimates allow translating the problem from $(M,g)$ to a compact perturbation of $(\mathbf R^n,\mathrm{eucl})$, thus reducing Theorem \ref{maincor} to Theorem \ref{mainthm}.

	\subsection*{Acknowledgment}
	
	I want to thank Joaquim Serra for introducing me to this topic and for helpful discussions and advises.

	\section{Heat kernel estimates in $\mathbf R^n$}\label{Euler Lagrange equation}
	In this section, we provide various heat kernel estimates for manifolds satisfying the assumptions of Theorem \ref{mainthm}. They will be used extensively in the next sections.
	
	\subsection{Kernel of the $s/2$-Laplacian for constant metrics on $\mathbf{R}^n$} We start by giving an explicit formula for $K_g(x,y)$, when $(M,g)$ is $\mathbf{R}^n$ equipped with a constant metric.
	
	\begin{lem} \label{kernelconstantmetric}Let $g$ be a $n\times n$ positive definite symmetric matrix, viewed as a constant metric on $\mathbf R^n$. The kernel of the $\frac s2$-Laplacian associated to this metric is 
		\[K_g(x,y)=\alpha_{n,s} |x-y|_g^{-(n+s)}, \]
		where $|\cdot|_g$ is the norm associated to $g$ and $\alpha_{n,s}$ is an explicit positive constant independent of $g$.\end{lem}

		\begin{proof}This comes down to finding an expression for the heat kernel $H_g(t,x,y)$. It can be computed directly by explicitly solving the heat equation, and one finds
			\begin{equation}\label{eq:heatconstant}H_g(t,x,y)=\frac{1}{(4\pi t)^{n/2}}\exp\Big(-\frac{|x-y|_g^2}{4t}\Big).\end{equation}
			Alternatively, assuming that one already knows the expression of the kernel associated to the Euclidean metric, it follows from Remark \ref{rem:scaling} by setting $(M,h)=(\mathbf R^n,\mathrm{eucl})$ and $\varphi=g^{1/2}:\mathbf R^n\to \mathbf R^n$.
	\end{proof}

	\subsection{Heat kernels bounds for metrics comparable to the Euclidean metric}   Now, we consider a smooth Riemannian metric $g$ on $\mathbf R^n$ that is comparable to the Euclidean metric in the sense that $\frac 12 \le g\le 2$ (we make no assumptions on the derivatives of $g$ yet). There is a convenient tool to obtain heat kernel upper bounds for such metrics, called the relative Faber--Krahn inequality.
		
		\begin{defi}[see {\cite[\S 15.6]{Grigorbook}}] A $n$-dimensional non-compact manifold $(M,g)$ satisfies the \emph{relative Faber--Krahn inequality (RFK)} if there exists a positive constant $b>0$  such that for any $x\in M$ and radius $r$, for any relatively compact open subset $\Omega\subset\subset B^g_r(x)$, we have
			\[\lambda_1(\Omega)\ge \frac{b}{r^2}\left(\frac{V(x,r)}{\mathrm{Vol}(\Omega)}\right)^{2/n},\] 
			where $\lambda_1(\Omega)$ denotes the first eigenvalue of the Dirichlet Laplacian $-\Delta_{\Omega}$. Here, $V(x,r)$ denotes the volume of the metric ball $B_r^g(x)$. \end{defi}
		We shall use the following characterization from \cite[Theorem 15.21]{Grigorbook} (see also \cite[Proposition 5.2]{grigoribero} for details on the dependencies of the constants involved):
		\begin{prop}\label{prop : Faber Krahn and gaussian bounds} Let $(M,g)$ be a complete non-compact manifold. Then $(M,g)$ satisfies the relative Faber--Krahn inequality (RFK) if and only if the two following properties hold:
			\begin{itemize} \item $(M,g)$ enjoys the volume doubling property for balls
				\begin{equation}\label{VD}\frac{V(x,R)}{V(x,r)}\le C_{1}\left(\frac{R}{r}\right)^{n}\ \ \ \ \text{for all $0<r\le R$}.\end{equation}
				\item There are Gaussian upper bounds on the heat kernel
				\begin{equation}\label{heat kernel upper bounds}H_g(t,x,y)\le \frac{C_2}{V(x,\sqrt t)}\mathrm{e}^{-\frac{d(x,y)^2}{Dt}}.\end{equation}
			\end{itemize}
			Moreover we have the following dependencies: $C_1=C_1(b,n)$, $D$ is any number $>4$ and $C_2=C_2(D,b,n)$.
		\end{prop}
		A notable property of (RFK) is its stability under quasi-isometries:
		
		\begin{lem}[{\cite[Corollary 15.15]{Grigorbook}}] \label{lem : RFK under quasi isometry} Let $(M,g)$ be a $n$-dimensional complete noncompact manifold, satisfying (RFK) for some $b>0$. Let $h$ be another metric on $M$, comparable to $g$ in the sense that $\frac 12 h\le g\le 2h$. Then, the manifold $(M,h)$ satisfies (RFK) with some different constant $b' =c_{n}b$. \end{lem}

 For our concerns, we are only interested in manifolds $(\mathbf R^n,g)$ with $\frac 12\le g\le 2$. In this case,  $V(x,\sqrt t)$ is comparable to $t^{n/2}$ and we have:
	
	\begin{prop}\label{FBmetric} Let $g$ be a smooth Riemannian metric on $\mathbf R^n$ satisfying $\frac 12\le g\le 2$. Then the heat kernel $H_g$ enjoys Gaussian upper bounds
			\[H_g(t,x,y)\le \frac{C}{t^{n/2}}\mathrm{e}^{-\frac{|x-y|_g^2}{D t}}.\]
			Here $D$ is any constant $>4$, and $C$ is a constant depending only on $D$ and $n$. Consequently, we have the following upper bound on the kernel of the fractional Laplacian, for some constant $C_{n,s}>0$:
			\[K_g(x,y)\le \frac{C_{n,s}}{|x-y|^{n+s}}.\]\end{prop}
		
		\begin{proof} From the expression of the standard heat kernel on $\mathbf R^n$, we see that $(\mathbf R^n,\mathrm{eucl})$ satisfies the items \eqref{VD} and \eqref{heat kernel upper bounds} of Proposition \ref{prop : Faber Krahn and gaussian bounds}. Consequently, $(\mathbf R^n,\mathrm{eucl})$ satisfies (RFK). Since $(\mathbf R^n,g)$ is quasi-isometric to $(\mathbf R^n,\mathrm{eucl})$, by Lemma \ref{lem : RFK under quasi isometry}, $(\mathbf R^n,g)$ satisfies (RFK), and we can appeal to Proposition \ref{prop : Faber Krahn and gaussian bounds} in the other direction to obtain the sought upper bound on the heat kernel associated to the metric $g$. The bound on $K_g(x,y)$ follows by a simple integration, observing that $|x-y|\le 2|x-y|_g$. \end{proof}
	
	\subsection{Freezing the metric} Now we consider a metric $g$ satisfying the assumptions of Theorem \ref{mainthm}. Fix $y\in \mathbf R^n$. The kernel $x\mapsto K_g(x,y)\sqrt{|g(x)|}$ associated to the metric $g$ is singular at $x=y$. However, we will show that it has the same singularity at $x=y$ as the kernel $x\mapsto K_{g(y)}(x,y)\sqrt{|g(y)|}$ associated to the constant metric $g(y)$, in the sense that the difference between the two kernels is integrable with respect to $x$. The purpose of this section is to show
	\begin{prop}\label{prop:changer noyau} Consider a smooth metric $g$ on $\mathbf R^n$, satisfying $\frac 12\le g\le 2$ and $r\|\nabla g_{ij}\|_{L^\infty(\mathbf R^n)}\le 1$ for any $i,j\in \{1,\ldots,n\}$. Then there is a constant $C_{n,s}$ depending only on $n,s$ such that for any $y\in \mathbf R^n$ we have
		\begin{equation}\label{diff noyaux}\int_{\mathbf R^n} \big|K_g(x,y)\sqrt{|g(x)|}-K_{g(y)}(x,y)\sqrt{|g(y)|}\big|\mathrm dx\le C_{n,s}r^{-s}.\end{equation}
	\end{prop}
	It allows us to rephrase the NMC boundedness assumption in a more practical way in the next section, by involving kernels associated to constant metrics. Proposition \ref{prop:changer noyau} will essentially follow from the next lemma, that provides an integral estimate on the difference between the heat kernels associated to the metrics $g$ and $g(y)$.
	
	\begin{lem} \label{effacement singularites}Assume $\frac 12\le g\le 2$ and $r\|\nabla g_{ij}\|_{L^\infty(\mathbf R^n)}\le 1$. Then, there is a constant $C_n>0$ depending only on $n$ such that for any $t>0$ and $y\in \mathbf R^n$,
		\[\int_{\mathbf R^n} \big|H_g(t,y,x)-H_{g(y)}(t,y,x)\big|\mathrm dx\le C_n\min \Big(1,\frac{\sqrt t}{r}\Big).\]
		Consequently, there is a constant $C_{n,s}$ depending only on $n$ and $s$ such that for any $y\in \mathbf R^n$,
		\[\int_{\mathbf R^n} \big|K_g(x,y)-K_{g(y)}(x,y)\big|\mathrm dx\le C_{n,s}r^{-s}.\]\end{lem}
	Before diving in the proof of Lemma \ref{effacement singularites}, we provide a representation formula for the difference $H_g(t,y,x)-H_{g(y)}(t,y,x)$.
	
	\begin{lem}\label{lem: Duhamel}Set $f(t,x):= H_g(t,y,x)-H_{g(y)}(t,y,x)$ and $F(t,x):=(\Delta_g-\Delta_{g(y)}) H_{g(y)}(t,x,y)$. Then we have the representation formula
		\begin{equation}\label{eq: representation f} f(t,x)=\int_0^t \int_{\mathbf R^n} H_g(t-\tau,x,z)F(\tau,z)\mathrm dV_g(z).\end{equation}\end{lem}
	The result essentially follows from Duhamel's formula, but one has to be careful because $f$ is not continuous at $t=0$. Still, $f(t,\cdot)$ converges to $0$ in the sense of distributions as $t\to 0^+$.
	\begin{proof}Start by noticing that
		\[\partial_t f(t,x)=\Delta_g f(t,x)+ F(t,x).\]
		Fix $t>0$, and take $\varepsilon\in (0,t)$. Since $f$ is smooth on $(0,+\infty)\times \mathbf R^n$, we have by Duhamel's formula:
		\[f(t,x)=\int_{\mathbf R^n} H_g(t-\varepsilon,x,z) f(\varepsilon,z)\mathrm dV_g(z)+\int_\varepsilon^{t}\int_{\mathbf R^n} H_g(t-\tau,x,z) F(\tau,z)\mathrm dV_g(z)\mathrm d\tau.\]
		Note that we cannot set $\varepsilon=0$ because $f$ is not continuous at $t=0$. Still, to obtain \eqref{eq: representation f} we just have to show
		\[\int_{\mathbf R^n} H_g(t-\varepsilon,x,z) f(\varepsilon,z)\mathrm dV_g(z)\underset{\varepsilon\to 0}\longrightarrow 0.\]
		First we need to get rid of the dependence in $\varepsilon$ in $H_g(t-\varepsilon,x,z)$. To achieve this, write
		\begin{equation}\label{toachieve} H_g(t-\varepsilon,x,z) f(\varepsilon,z)= H_g(t,x,z)f(\varepsilon,z)+\big(H_g(t-\varepsilon,x,z)-H_g(t,x,z)\big) f(\varepsilon,z).\end{equation}
		To handle the first term of the right-hand side of \eqref{toachieve}, notice that
		\begin{equation}\label{deux termes} \begin{array}{ll}\displaystyle \int_{\mathbf R^n}H_g(t,x,z)f(\varepsilon,z)\mathrm dV_g(z)& =\displaystyle\int_{\mathbf R^n} H_g(t,x,z) H_g(\varepsilon,y,z)\mathrm dV_g(z)\\ & -\displaystyle\int_{\mathbf R^n} \Big(\frac{\sqrt{|g(z)|}}{\sqrt{|g(y)|}}H_g(t,x,z)\Big) H_{g(y)}(\varepsilon,y,z) \mathrm dV_{g(y)}(z).\end{array}\end{equation}
		Recall that $H_g(\varepsilon,y,\cdot)\mathrm dV_g$ and $H_{g(y)}(\varepsilon,y,\cdot)\mathrm dV_{g(y)}$ both converge to $\delta_y$ as $\varepsilon\to 0$. Therefore, both integrals in the right-hand side of \eqref{deux termes} converge to $H_g(t,x,y)$ as $\varepsilon\to 0$, hence their difference goes to $0$. To handle the second term in the right-hand side of \eqref{toachieve}, we use the fact that the heat kernels have mass $1$ (against the appropriate volume forms, but here $2^{-n/2}\le \sqrt{|g|} \le 2^{n/2}$) to bound
		\[\left|\int_{\mathbf R^n} \big(H_g(t-\varepsilon,x,z)-H_g(t,x,z)\big) f(\varepsilon,z)\mathrm dz\right|\le C_n \|H_g(t-\varepsilon,x,\cdot)-H_g(t,x,\cdot)\|_{L^\infty(\mathbf R^n)}.\]
		This quantity goes to $0$ as $\varepsilon\to0$, as follows from the continuity of $H_g(\cdot,x,\cdot)$ on $(0,+\infty)\times \mathbf R^n$ and the decay properties of the heat kernel. This concludes the proof.
	\end{proof}
	Let us now turn to the proof of Lemma \ref{effacement singularites}.
	
	\begin{proof}[Proof of Lemma \ref{effacement singularites}]
		With notations of Lemma \ref{lem: Duhamel}, we want to show
		\begin{equation}\label{withnotationgoal}\int_{\mathbf R^n} |f(t,x)|\mathrm dx\le C_n\min\Big(1,\frac{\sqrt t}{r}\Big).\end{equation}
		We start from the representation formula \eqref{eq: representation f}. By the triangular inequality, we have
		\[|f(t,x)|\le \int_0^t \int_{\mathbf R^n} H_g(t-\tau,x,z)|F(\tau,z)|\mathrm dV_g(z).\]
		We integrate this inequality over $x\in\mathbf R^n$, recalling that $H_g(\tau,\cdot,z)\sqrt{|g|}$ has mass $1$ and $\sqrt{|g|}\ge 2^{-n/2}$, to obtain
		\begin{equation}\label{eq: apres integration x}\int_{\mathbf R^n} |f(t,x)|\mathrm dx\le C_n\int_0^t \int_{\mathbf R^n}|F(\tau,z)|\mathrm dV_g(z).\end{equation}
		Thus it remains to control $F(\tau,z)=(\Delta_g-\Delta_{g(y)}) H_{g(y)}(\tau,z,y)$, which is a linear combination of first and second space derivatives of $H_{g(y)}(\tau,\cdot,y)$. At small $\tau$, the mass of derivatives of $H_{g(y)}(\tau,\cdot,y)$ concentrates near $y$. Notably, the mass of second derivatives of $H_{g(y)}(\tau,\cdot,y)$ explodes like $\tau^{-1}$ when $\tau\to 0$. However, even though $(\Delta_g-\Delta_{g(y)})$ is a differential operator of order $2$, its term of leading order vanishes at $y$, allowing to tame the singularity of $F(\tau,z)$ at $z=y$. 
		
		Let us put these ideas in practice. First, we recall that the Laplacian $\Delta_g$ is given by
		\[\Delta_g=\frac{1}{\sqrt{|g|}} \mathrm{div}(\sqrt{|g|}g^{-1}\nabla \cdot),\]
		where $\mathrm{div}$ and $\nabla$ denote the standard divergence and gradient operators on $\mathbf R^n$. Then, one can check from the assumptions on $g$ that 
		\[\Delta_g-\Delta_{g(y)} =\sum_{1\le i,j\le n} a_{ij}(z)\frac{\partial^2}{\partial z^i\partial z^j} +\sum_{{1\le i\le n}}b_i(z)\frac{\partial}{\partial z^i},\]
		where the coefficients $a_{ij},b_i$ are smooth functions satisfying estimates $|a_{ij}(z)|\le C_n r^{-1}|z-y|$ and $|b_i(z)|\le C_nr^{-1}$. Thus, for any function $h=h(z)$ we have
		\begin{equation}\label{diff laplaciens}\big|(\Delta_g-\Delta_{g(y)})h(z)\big| \le C_n\Big(r^{-1} |z-y| \cdot\|\mathrm D^2h(z)\|+r^{-1}\|\mathrm Dh(z)\|\Big),\end{equation}
		where $\|\mathrm D^k h(z)\|:=\sup_{|\alpha|=k} |\frac{\partial^{|\alpha|} h(z)}{\partial z^\alpha}|$. Notably, for $h(z)=H_{g(y)}(\tau,z,y)$ we have
		\[\|\mathrm D h(z)\|\le C_n\frac{|z-y|}{\tau}H_{g(y)}(\tau,z,y), \qquad \|\mathrm D^2 h(z)\|\le C_n\Big(\frac{1}{\tau}+\frac{|z-y|^2}{\tau^2}\Big) H_{g(y)}(\tau,z,y).\]
		Consequently, by \eqref{diff laplaciens} we obtain
		\[|F(\tau,z)|\le \frac{C_n}r \left(\frac{|z-y|}{\tau}+\frac{|z-y|^3}{\tau^2}\right) H_{g(y)}(\tau,z,y).\]
		Letting $u(x):=(|x|+|x|^3)\exp(-|x|_{g(y)}^2/4)$ and recalling the expression of $H_{g(y)}$ from \eqref{eq:heatconstant}, the inequality above yields
		\[|F(\tau,z)|\le \frac{C_n}{r\sqrt \tau} \tau^{-\frac n2}u\Big(\frac{z-y}{\sqrt \tau}\Big).\]
		Thus, recalling \eqref{eq: apres integration x} and using the fact that $\sqrt{|g(z)|}\le 2^{\frac n2}$,
		\[\int_{\mathbf R^n}|f(t,x)|\mathrm dx\le \frac{C_n}{r}\int_0^t\frac{\mathrm d\tau}{\sqrt \tau}\int_{\mathbf R^n} u\Big(\frac{z-y}{\sqrt \tau}\Big)\frac{\mathrm dz}{\tau^{n/2}}\le \frac{C_n}{r}\int_0^t \frac{\mathrm d\tau}{\sqrt \tau} \le C_n\frac{\sqrt t}{r}.\]
		In the second inequality we have used the integrability of $u$ on $\mathbf R^n$. This upper bound is rather poor when $t$ is much larger than $r^2$. Nevertheless, since the functions $H_g(t,y,\cdot)\sqrt{|g(\cdot)|}$ and $H_{g(y)}(t,y,\cdot)\sqrt{|g(y)|}$ have mass $1$ and $\sqrt{|g|}\ge 2^{-n/2}$, we can always bound
		\[\int_{\mathbf R^n} |f(t,x)|\mathrm dx\le C_n.\]
		This proves \eqref{withnotationgoal}. The upper bound on $\int_{\mathbf R^n}\big|K_g(x,y)-K_{g(y)}(x,y)\big|\mathrm dx$ follows by integration of \eqref{withnotationgoal} against $\frac{\mathrm dt}{t^{1+s/2}}$ on $\mathbf R_+$.
	\end{proof}

	\begin{proof}[Proof of Proposition \ref{prop:changer noyau}] By the triangular inequality, $\big|K_g(x,y)\sqrt{|g(x)|}-K_{g(y)}(x,y)\sqrt{|g(y)|}\big|$ is smaller than \[ \big|K_g(x,y)-K_{g(y)}(x,y)\big| \sqrt{|g(x)|} +\big|\sqrt{|g(x)|}-\sqrt{|g(y)|}\big| K_{g(y)}(x,y).\]
		From the hypotheses on $g$, we have $\sqrt{|g|}\le 2^{n/2}$ and $\big|\sqrt{|g(x)|}-\sqrt{|g(y)|}\big|\le C_n\min(1,r^{-1}|x-y|)$, thus
		\[\int_{\mathbf R^n} \big|K_g(x,y)\sqrt{|g(x)|}-K_{g(y)}(x,y)\sqrt{|g(y)|}\big|\mathrm dx\]
		is smaller than a constant depending only on $n$, times
		\[\int_{\mathbf R^n}\big|K_g(x,y)-K_{g(y)}(x,y)\big|\mathrm dx+\int_{\mathbf R^n}\min(1,r^{-1}|x-y|)K_{g(y)}(x,y)\mathrm dx.\]
		Both integrals are bounded by $C_{n,s}r^{-s}$. For the first one this is the content of Lemma \ref{effacement singularites}, while the second one can be estimated by using the upper bound $K_{g(y)}(x,y)\le C_{n,s}|x-y|^{-(n+s)}$.  
		
	\end{proof}

	\section{Rephrasing NMC boundedness condition} \label{sec:freeze}  
	In this section we stick to the setting of Theorem \ref{mainthm}, that is $(M,g)=(\mathbf R^n,g)$ with $\frac 12\le g\le 2$ and $r\|\nabla g_{ij}\|_{L^\infty(\mathbf R^n)}\le 1$ for any $i,j\in \{1,\ldots,n\}$. We will go back to the general case only in  Section \ref{fromto}.
	
	As we claimed in the introduction, a measurable subset $E$ of $\mathbf R^n$ has well-defined NMC at $y\in \partial E$ for the metric $g$ whenever $E$ has an interior or exterior tangent ball at $y$. More precisely, we have the following proposition.
 
	\begin{prop}\label{prop: NMC bien définie} Let $g=(g_{ij})$ be a smooth Riemannian metric on $\mathbf R^n$ satisfying $\frac 12\le g\le 2$ and $r\|\nabla g_{ij}\|_{L^\infty(\mathbf R^n)}\le 1$ for any $i,j\in \{1,\ldots,n\}$. Let $E$ be a measurable subset of $\mathbf R^n$ and assume that $E$ has an interior (resp. exterior) tangent ball at $y\in \partial E$. Then, $\mathrm H_s^g[E](y)$ is well-defined, with value in $(-\infty,+\infty]$ (resp. in $[-\infty,+\infty)$).  \end{prop}
	
	When $g$ is constant, this is proved by exploiting cancellations between $E$ and $\mathcal CE$ in the integral \eqref{eq: def NMC} defining $\mathrm{H}_s^g[E]$. This is directly possible because the kernel $K_g(x,y)\mathrm dV_g(x)$ is invariant under the transformation $x\mapsto 2y-x$. This property fails when $g$ is nonconstant. Still, Proposition \ref{prop:changer noyau} shows that we can replace $K_g(x,y)\sqrt{|g(x)|}$ in the definition of $\mathrm{H}_s^g[E](y)$, by the (symmetric) kernel associated to the constant metric $g(y)$, only adding a uniformly bounded term to $\mathrm{H}_s^g[E](y)$. An immediate byproduct of the proof of Proposition \ref{prop: NMC bien définie} will be an estimate on the difference $\mathrm{H}_s^g[E](y)-\mathrm{H}_s^{g(y)}[E](y)$. Here we consider a specific class of metrics on $\mathbf R^n$, but will prove the result under no restriction in Proposition \ref{prop: NMC bien définie bis}.

	\begin{proof}[Proof of Proposition \ref{prop: NMC bien définie}] We start by pointing out that if the principal value \eqref{eq: def NMC} is well-defined, then we can replace metric balls by balls associated to the frozen metric $g(y)$ without altering the definition of $\mathrm{H}_s^g[E](y)$. This doesn't even require exploiting some kind of cancellations in the integrals. Indeed, 
		\begin{equation}\label{peu importe les boules}\left|\int_{\mathbf R^n\backslash B_\varepsilon^g(y)} K_g(x,y)\mathrm dV(x)-\int_{\mathbf R^n\backslash B_\varepsilon^{g(y)}(y)} K_g(x,y)\mathrm dV(x)\right|\le C_{n,s}\int_{B_\varepsilon^g(y)\triangle B_\varepsilon^{g(y)}(y)} \frac{\mathrm dx}{|x-y|^{n+s}}.\end{equation}
		Since $g$ is smooth, we have $|B_\varepsilon^g(y)\triangle B_\varepsilon^{g(y)}(y)|\le C \varepsilon^{n+1}$ as $\varepsilon\to 0$. Combined with the fact that $\mathrm{dist}(B_\varepsilon^g(y)\triangle B_\varepsilon^{g(y)}(y),y)\ge \frac 12\varepsilon$, this shows that \eqref{peu importe les boules} is bounded by a constant times $\varepsilon^{1-s}$. Since $s<1$, this quantity goes to $0$ as $\varepsilon\to 0$.
		
		We turn to the proof of the Proposition. Without loss of generality, assume that $E$ has an interior tangent ball at $y\in \partial E$. For $\varepsilon>0$, write
		\[\int_{\mathbf R^n\backslash B^{g(y)}_\varepsilon(y)} \big(\chi_E-\mathcal \chi_{\mathcal C E}\big)(x)K_g(x,y)\mathrm dV(x)=I_1+I_2,\]
		where we set
		\[I_1= \int_{\mathbf R^n\backslash B^{g(y)}_\varepsilon(y)} \big(\chi_E-\mathcal \chi_{\mathcal C E}\big)(x)K_{g(y)}(x,y)\sqrt{|g(y)|} \mathrm dx,\]
		and
		\[I_2= \int_{\mathbf R^n\backslash B^{g(y)}_\varepsilon(y)} (\chi_E-\mathcal \chi_{\mathcal C E})(x)\big(K_g(x,y)\sqrt{|g(x)|}-K_{g(y)}(x,y) \sqrt{|g(y)|}\big) \mathrm dx.\]
		According to Proposition \ref{prop:changer noyau}, the integral $I_2$ is absolutely convergent uniformly with respect to $\varepsilon$, and is bounded by a constant (depending only on $n$ and $s$) times $r^{-s}$. Let us now handle $I_1$. Let $\mathbf B^+\subset E$ be a Euclidean ball tangent to $\partial E$ at $y$, and write $\chi_E-\chi_{\mathcal C E}=\chi_{\mathbf B^+}-\chi_{\mathcal C \mathbf B^+} +2\chi_{E\backslash \mathbf B^+}$, so that we can split $I_1=I_3+I_4$  with
		\[I_3=\sqrt{|g(y)|}\int_{\mathbf R^n\backslash B^{g(y)}_\varepsilon(y)} \big(\chi_{\mathbf B^+}-\mathcal \chi_{\mathcal C \mathbf B^+}\big)(x)K_{g(y)}(x,y) \mathrm dx\]
		and 
		\[I_4=2\sqrt{|g(y)|}\int_{\mathbf R^n\backslash B^{g(y)}_\varepsilon(y)} \chi_{E\backslash \mathbf B^+}(x) K_{g(y)}(x,y) \mathrm dx.\]
		We can exploit cancellations between $\mathbf B^+$ and $\mathcal C \mathbf B^+$ thanks to the facts that $K_{g(y)}(y+z,y)=K_{g(y)}(y-z,y)$ and that the ball $B_\varepsilon^{g(y)}(y)$ is invariant by the transformation $y+z\mapsto y-z$. Indeed, if we let $\mathbf B^-$ denote the Euclidean ball obtained by flipping $\mathbf B^+$ along the tangent to $\mathbf B^+$ at $y$, then we have
		\[I_3=-\sqrt{|g(y)|}\int_{\mathbf R^n\backslash B^{g(y)}_\varepsilon(y)} \chi_{\mathcal C(\mathbf B^+\cup \mathbf B^-)}(x)K_{g(y)}(x,y) \mathrm dx.\]
		The integrand is absolutely convergent on $\mathbf R^n$, as one can check from the smoothness of $\mathbf B^+$ and the upper bound $K_{g(y)}(x,y)\le C_{n,s} |x-y|^{-(n+s)}$.
		Finally, $I_4$ is the integral of a positive function over $\mathbf R^n\backslash B_\varepsilon^{g(y)}(y)$, thus it converges to some value in $[0,+\infty]$ as $\varepsilon\to 0$. This concludes the proof. We point out that $I_1$ just converges to $\mathrm{H}_s^{g(y)}[E](y)$ as $\varepsilon\to 0$.
	\end{proof}
	
	\begin{rem} \label{Remarque shrinking balls} The proof shows that whenever $E$ has a tangent ball at $y\in \partial E$, we can replace the shrinking balls $B_\varepsilon^g(y)$ in the definition of $\mathrm H_s[E](y)$ by any family of shrinking sets $(U_\varepsilon)$ that contain $y$, and that are invariant under the transformation $x\mapsto 2y-x$. Even more generally, for any smooth diffeomorphism $\varphi$ defined on a neighborhood of $0$ and mapping $0$ to $y$, we may take $U_\varepsilon=\varphi(B_\varepsilon(0))$ without changing the value of $H_s[E](y)$. Indeed, if $\mathcal T_y$ denotes the symmetry $x\mapsto 2y-x$ then $W_\varepsilon:= U_\varepsilon\cup\mathcal T_yU_\varepsilon$ satisfies $y\in W_\varepsilon$ and $\mathcal T_yW_\varepsilon=W_\varepsilon$, thus $(W_\varepsilon)$ is an admissible family of shrinking sets, then we observe that $|W_\varepsilon\triangle U_\varepsilon|= |\mathcal T_y U_\varepsilon\backslash U_\varepsilon|\le C_{n,\varphi} \varepsilon^{n+1}$ and $\mathrm{dist}(W_\varepsilon\triangle U_\varepsilon)\ge c \varepsilon$ for some constant $c>0$ depending on $\varphi$, thus we can argue as in \eqref{peu importe les boules}. In particular, we can work with Euclidean balls in the following. 
	\end{rem}
	By carefully looking at the proof of Proposition \ref{prop: NMC bien définie}, we see that sets with finite NMC at a point $y$ for the metric $g$ have finite NMC at $y$ for the constant metric $g(y)$ (in the viscosity sense).
	
	\begin{prop}\label{approxeulerlagrange}Let $g$ be a smooth Riemannian metric on $\mathbf R^n$ satisfying $\frac 12\le g\le 2$ and  $r \|\nabla g_{ij}\|_{L^\infty(\mathbf R^n)}\le 1$ for any $i,j\in \{1,\ldots,n\}$. Consider a measurable subset $E\subset \mathbf R^n$, and assume that $E$ has NMC bounded by $C_0r^{-s}$ in the viscosity sense in $\Omega\subset \mathbf R^n$ (for the metric $g$). Then, whenever $E$ has an interior (resp. exterior) tangent ball at $y\in \partial E\cap \Omega$, the NMC of $E$ at $y$ for the constant metric $g(y)$ is well-defined, and we have $\mathrm H_s^{g(y)}[E](y)\le (C_0+C_{n,s})r^{-s}$ (resp. $\mathrm H_s^{g(y)}[E](y)\ge -(C_0+C_{n,s})r^{-s}$.)
	\end{prop}

	\section{Improvement of flatness for sets of bounded nonlocal mean curvature}\label{Improvement of flatness}

	In this section, we show an improvement of flatness theorem for subsets $E\subset (\mathbf R^n,g)$ of bounded nonlocal mean curvature in the viscosity sense for the metric $g$, when $g$ satisfies the hypotheses of Theorem \ref{mainthm}. The proof for subsets of $\mathbf R^n$ with zero NMC for the Euclidean metric is done in \cite[\S6]{CRS} and we will follow it closely, although we need to work out again the arguments since we are dealing with nonconstant metrics and only assume a bound on the nonlocal mean curvature. In practice, though, Proposition \ref{approxeulerlagrange} will allow us to deal with kernels associated to constant metrics.
	
	 \begin{thm}[Improvement of flatness] \label{improvement} There exists $k_0\in \mathbf N$ depending on $C_0,n,s$ and $\alpha$ such that the following holds. Let $g=(g_{ij})$ be a smooth Riemannian metric on $\mathbf R^n$ satisfying $\frac 12\le g\le 2$ and  $r\|\nabla g_{ij}\|_{L^\infty(\mathbf R^n)}\le 1$ for any $i,j\in \{1,\ldots,n\}$. Let $E$ be a measurable subset of $\mathbf R^n$ with $NMC$ bounded by $C_0r^{-s}$ in the viscosity sense in $B_r(0)$ (for the metric $g$). Moreover, assume $0\in \partial E$ and 
			\[\{x\cdot \nu_l\le -r 2^{-l(\alpha+1)}\}\subset  E\subset \{x\cdot \nu_l\le r 2^{-l(\alpha+1)}\}~\text{in $B_{r2^{-l}}(0)$},\]
			for some family of unit vectors $(\nu_l)_{0\le l\le k}$, with $k\ge k_0$. Then there exists a unit vector $\nu_{k+1}$ such that
			\[\{x\cdot \nu_{k+1}\le -r 2^{-(k+1)(\alpha+1)}\}\subset  E\subset \{x\cdot \nu_{k+1}\le r2^{-(k+1)(\alpha+1)}\}~\text{in $B_{r2^{-(k+1)}}(0)$}.\]
	\end{thm}
	\begin{figure}[h]
		\frame{\includegraphics[width=13cm]{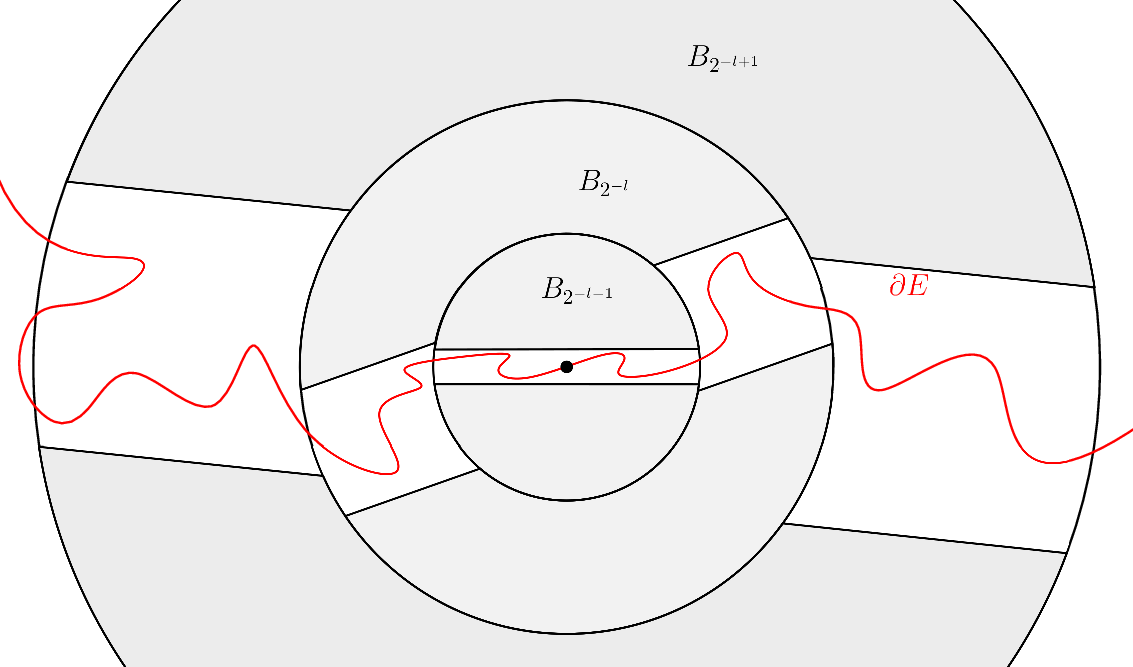}}
		\caption{The trapping hypothesis on $E$, for $r=1$. Observe that normal vectors to the cylinders cannot be too far apart from each other.}
		\label{flatcyl}
		\centering\end{figure}
	
	Let us give a sketch of the proof of the theorem before diving into the details. From now on we assume $r=1$, as the general result follows by performing a rescaling.
	
	\begin{proof}[Sketch of the proof] 
		The proof works by contradiction. Assume that we have a sequence of metrics $(g_k)$ on $\mathbf R^n$ satisfying the assumptions of Theorem \ref{improvement} with $r=1$, and a sequence of measurable sets $\{E_k\}_{k\ge 1}$ with $0\in \partial E_k$, such that $E_k$ has NMC bounded by $C_0$ (in the viscosity sense) for the metric $g_k$ in $B_1(0)$, and such that the inclusions in the statement of Theorem \ref{improvement} hold for $E_k$, for $j=0,\ldots,k$, say
		\begin{equation}\label{introset} \{x\cdot \nu_j^k\le -2^{-j(\alpha+1)}\}\cap B_{2^{-j}}(0)\subset E_k\cap B_{2^{-j}}(0)\subset \{x\cdot \nu_j^k\le 2^{-j(\alpha+1)}\}\end{equation} 
		for some family of unit vectors $(\nu_j^k)$, but the conclusion fails. Up to rotating the sets $E_k$, we can assume $\nu_k^k=e_n$ for all $k$. This requires in turn to pullback the metric $g_k$ by a rotation, but this causes no harm because the class of metrics that we are studying is invariant under such transformations. 
	We dilate the sets $E_k$ by a factor $2^{k}$, then stretch them by a factor $\frac{1}{a_k}=2^{k\alpha}$ in the normal direction $e_n$. It yields a new sequence $\{ \widetilde E_k^*\}$. In Proposition \ref{prop:harnack}, we show that sets with bounded NMC whose boundaries are trapped in a flat cylinder cannot oscillate too much in the direction normal to the cylinder. This implies that up to a subsequence, $\{\widetilde E_k^*\}$ converges, uniformly on compact subsets of $\mathbf R^n$, to a half-space whose boundary passes through the origin. This fact contradicts the assumption that the sets $E_k$ fail to satisfy the conclusion of Theorem \ref{improvement}.
\end{proof}

\subsection{Harnack inequality} \label{sec: harnack} The first step towards the result is to prove a Harnack-type inequality, allowing to control oscillations of sets of bounded NMC whose boundaries are trapped in a flat cylinder. As we shall see, this result can be iterated when the cylinder is sufficiently flat.

\begin{prop}[Harnack inequality]\label{prop:harnack} There exists $\delta\in (0,1)$ and $k_1\in \mathbf N$ depending on $C_0,n,s$ and $\alpha$, such that the following holds.
	Let $g=(g_{ij})$ be a smooth Riemannian metric on $\mathbf R^n$, satisfying $\frac 12\le g\le 2$ and  $\|\nabla g_{ij}\|_{L^\infty(\mathbf R^n)}\le 1$ for any $i,j\in \{1,\ldots,n\}$.  Let $E$ be a measurable subset of $\mathbf R^n$, with NMC bounded by $C_0$ in the viscosity sense in $B_1(0)$, for the metric $g$. Moreover, assume that there is some $k\ge k_1$ such that
	\begin{equation}\label{eq:assumption inclusion harnack}\{x\cdot \nu_l\le -2^{-l(1+\alpha)}\}\subset  E\subset \{x\cdot \nu_l\le 2^{-l(1+\alpha)}\}~ \text{in $B_{2^{-l}}(0)$},\end{equation}
	for some family of unit vectors $(\nu_l)_{0\le l\le k}$, with $\nu_k=e_n$. Then, we have either
	\[\{|x'|\le 2^{-k}\delta\}\times \{-2^{-k(1+\alpha)}\le x^n\le 2^{-k(1+\alpha)}(-1+\delta^2)\} \subset E\]
	or
	\[\{|x'|\le 2^{-k}\delta\}\times \{2^{-k(1+\alpha)}(1-\delta^2)\le x^n\le 2^{-k(1+\alpha)}\}\subset \mathcal CE.\]
\end{prop}	


\begin{proof} We follow the proof of \cite{CRS}. The difference is that the assumption that $E$ has zero NMC is replaced by a NMC bound by $C_0$ in the viscosity sense in $B_{1}(0)$ (for a nonconstant metric). 
	
	By Proposition \ref{approxeulerlagrange}, there is a constant $C_1$ depending only on $C_0,n,s$ such that whenever $E$ has an interior tangent ball at $y\in\partial E\cap B_1(0)$, we have
	\begin{equation} \label{EulerLagrange}\mathrm{p.v.}\int_{\mathbf{R}^n} \big(\chi_E(x)-\chi_{\mathcal C E}(x)\big)K_{g(y)}(x,y) \mathrm dx\le C_1,\end{equation}
	with an analogous statement whenever $E$ has an exterior tangent ball at $y\in \partial E\cap B_1(0)$. Now that we are dealing with the kernel $K_{g(y)}(x,y)=\alpha_{n,s} |x-y|_{g(y)}^{-(n+s)}$, the rest of the proof mimics that of the Euclidean case.
	
	\textit{Step 1. Estimating the nonlocal contribution.} Assume $y\in B_{2^{-k-1}}(0)$. We wish to estimate the nonlocal contribution
	\[\left|\int_{\mathcal C B_{2^{-k-1}}(y)} \big(\chi_E(x)-\chi_{\mathcal C E}(x)\big)K_{g(y)}(x,y) \mathrm dx\right|.\]
	Here the integral is convergent in the usual sense. First, we start by controlling the tail, using Proposition \ref{FBmetric}:
	\begin{equation}\label{tail}\left|\int_{\mathcal C B_{1/2}(y)}\big(\chi_E(x)-\chi_{\mathcal C E}(x)\big)K_{g(y)}(x,y) \mathrm dx\right|\le 2\int_{\mathcal C B_{1/2}(y)} K_{g(y)}(x,y) \mathrm dx\le C_{n,s}.\end{equation}
	Now we aim to bound the contribution of the dyadic annuli $B_{2^{-l}}(y)\setminus B_{2^{-l-1}}(y)$, that we denote by $I_l$. Here we use \eqref{eq:assumption inclusion harnack} to get cancellations in the integrals. Since $y\in B_{2^{-k-1}}(0)$, we have $B_{2^{-l}}(y)\subset B_{2^{-(l-1)}}(0)$ for $l=1,\ldots,k+1$. Recalling \eqref{eq:assumption inclusion harnack} we obtain that for $l=1,\ldots,k+1$ we have
	\[\{(x-y)\cdot \nu_{l-1}\le -2^{1-(l-1)(1+\alpha)}\}\subset E\subset \{(x-y)\cdot \nu_{l-1}\le 2^{1-(l-1)(1+\alpha)}\}~\text{in $B_{2^{-l}}(y)$}.\]
	By using the symmetry property $K_{g(y)}(y+z,y)=K_{g(y)}(y-z,y)$ and the upper bound on $K_{g(y)}(x,y)$ we get
	\begin{equation}\label{dyadic} |I_l| \le C_{n,s} \displaystyle\int_{B_{2^{-l}}(y)\setminus B_{2^{-l-1}}(y)} \mathbf{1}_{\{|(x-y)\cdot \nu_{l-1}|\le 2^{2+\alpha}2^{-l(1+\alpha)} \}} \frac{\mathrm dx}{|x-y|^{n+s}} \le C_{n,s}2^{l(s-\alpha)},\end{equation}
	for $l=1,\ldots,k+1$. Summing inequalities (\ref{dyadic}) for $l=1,\ldots,k+1$ we obtain
	\begin{equation}\label{unionanneaux}\left|\int_{B_{1/2}(y)\setminus B_{2^{-k-1}}(y)}(\chi_{E}-\chi_{\mathcal C E})(x) K_{g(y)}(x,y) \mathrm dx\right|\le C_{n,s}2^{k(s-\alpha)}.\end{equation}
	Finally, as $s-\alpha>0$, combining (\ref{tail}) and (\ref{unionanneaux}), the nonlocal contribution is bounded by
	\begin{equation}\label{eq:nonlocal contrib at last}\left|\int_{\mathbf{R}^n\setminus B_{2^{-k-1}}(y)}(\chi_{E}-\chi_{\mathcal C E})(x) K_{g(y)}(x,y) \mathrm dx\right|\le C_{n,s}2^{k(s-\alpha)}.\end{equation}

	\textit{Step 2. Local estimates.} Let $a=2^{-k\alpha}$, and assume that $k$ and $\delta$ are chosen to ensure $a\le \delta$. Let us work with the rescaled set $\widetilde E:=2^kE$. By the hypothesis on $E$, we have 
	\[\{x^n<-a\}\cap B_1\subset \widetilde E \cap B_1\subset \{x^n<a\}.\] We also assume that $\widetilde E $ contains more than half of the measure of the cylinder $D=\{|x'|\le \delta\}\times \{|x^n|\le a\}$, that is,
	\begin{equation}\label{eq: moitié mesure} |\widetilde E \cap D|\ge \frac 12 |D|.\end{equation}
	If not, we can just work with the complement $\mathcal C \widetilde E $. Let us show that it implies \begin{equation}\label{eq: claim inclusion} \{x^n\le (-1+\delta^2)a\}\cap D\subset \widetilde E, \end{equation}
	for an adequate choice of $\delta$. If \eqref{eq: claim inclusion} does not hold, take some point $z\in \{x^n\le (-1+\delta^2)a\}\cap D$ with $ z\notin \widetilde E$. We slide by below the parabola $\{x^n=-\frac{a}{2}|x'|^2\}$, until we touch $\partial \widetilde E$ at a point $\tilde y=2^k y\in B_1$, see Figure \ref{figsliding'}. In particular, $E$ has an interior tangent ball at $y\in \partial E$ so we can use \eqref{EulerLagrange}. \begin{figure}[h]
		\frame{\includegraphics[width=13cm]{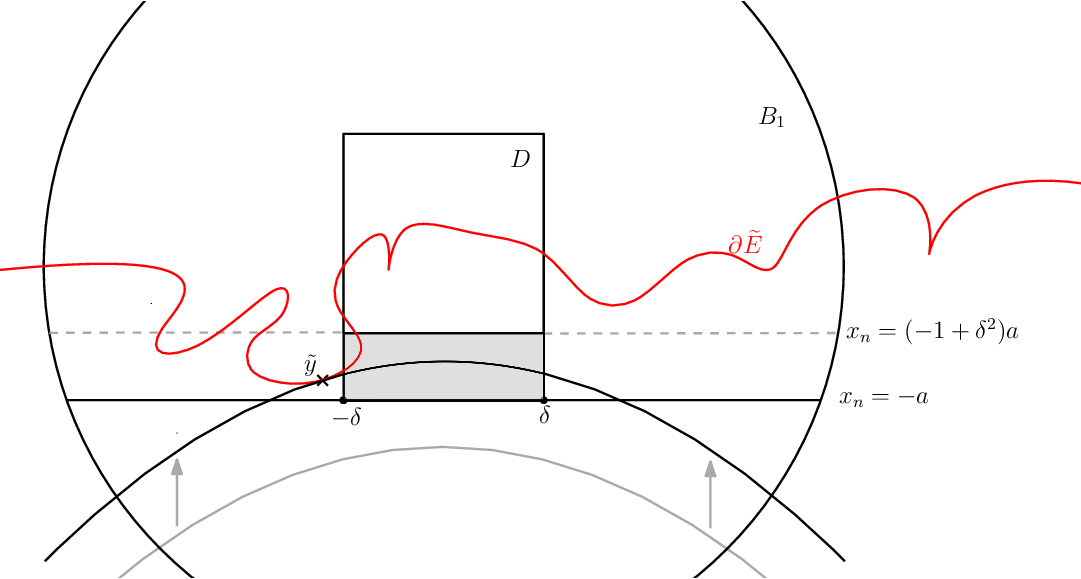}}
		\caption{Sliding a parabola until touching $\partial \widetilde E$. The shaded area $\{|x'|\le \delta\}\times \{-a<x^n<(-1+\delta^2)a\}$ is entirely contained under the parabola when $t\ge a(-1+\frac 32\delta^2)$. Notice that the contact point $\tilde y$ need not belong to the shaded area, but $|\tilde y'|\le 2\delta$ and $|\tilde y^n+a|\le 2a\delta^2$. We stress out that $\partial \widetilde E$ need not be a graph.}
		\label{figsliding'}
		\centering\end{figure}
	Denote by $t\mapsto \{(x',-a/2|x'|^2 +t)\}$ the sliding graphs, and let $t_0$ denote the first time at which the graph hits $\partial \widetilde E$ in $B_1$. We claim that $t_0\le a(-1+3/2\delta^2)$. If not, there is some $t>a(-1+3/2\delta^2)$ such that the subgraph of $x'\mapsto -a/2|x'|^2 +t$ doesn't intersect $\mathcal C\widetilde E$ in $D$. This is absurd because the point $z$ chosen above belongs to this subgraph, since
	\[z^n\le (-1+\delta^2) a\le -\frac a2|z'|^2+t.\]
	
	Notice that $-a\le \tilde y^n\le t_0$, giving $|\tilde y_n+a|\le 2a\delta^2$. Since $\tilde y^n=-\frac a2 |\tilde y'|^2+t_0$ we deduce $|\tilde y'|^2\le 3\delta^2$. Altogether we have	
	\begin{equation}\label{controly}|\tilde y'|\le 2\delta, \ \ \ \ \ |\tilde y^n+a|\le 2a\delta ^2.\end{equation}
	Note that in particular $y\in B_{2^{-k-1}}(0)$, provided that $\delta$ is small enough, which allows using \eqref{eq:nonlocal contrib at last}.
	
	Denote by $\widetilde P$ the subgraph of the parabola touching $\partial \widetilde E$ at $\tilde y$, and let $ P:=2^{-k}\widetilde P$ denote the subgraph scaled back by a factor $2^{-k}$. Then, one has
	\[\begin{array}{ll}\displaystyle \mathrm{p.v.} \int_{B_{2^{-k-1}}(y)} (\chi_{E}-\chi_{\mathcal C E})(x) K_{g(y)}(x,y) \mathrm dx &=\displaystyle \mathrm{p.v.} \int_{B_{2^{-k-1}}(y)} (\chi_P -\chi_{\mathcal C P})(x)K_{g(y)}(x,y)\mathrm dx\\ &+2\displaystyle\int_{B_{2^{-k-1}}(y)} \chi_{E\setminus P}(x)K_{g(y)}(x,y) \mathrm dx\\ &=: I_3+I_4,\end{array}\]
	where $I_4$ is the integral of a positive measurable function, thus $I_4$ belongs to $ [0,+\infty]$. 
	
	\textit{Step 3. Lower bound on $I_3$.} Denote by $\nu$ the normal vector to the parabola at $\widetilde y$. Then there is some constant $C$ depending only on $n$ such that
	\[\partial \widetilde P\cap B_\rho(\tilde y)\subset \{x, \ |(x-\tilde y)\cdot \nu|\le C a\rho^2\}, \qquad \text{for all $\rho\in [0,1/2]$}.\]
	Thanks to the symmetry property $K_{g(y)}(y,y+z)=K_{g(y)}(y,y-z)$ we get some cancellations between $P$ and $\mathcal CP$ when computing $I_3$, and obtain a lower bound
	\begin{equation}\label{I3} I_3\ge -C_{n,s}2^{ks}\int_0^{1/2}\frac{a\rho^{n}}{\rho^{n+s}} \mathrm d\rho\ge -C_{n,s} 2^{k(s-\alpha)}.\end{equation}
	
	\textit{Step 4. Lower bound on $I_4$.} Since $t_0+a\le \frac 32 a\delta^2$ we have $|\widetilde P\cap D|\le \delta^2 |D|$. Together with \eqref{eq: moitié mesure} this implies $|(\widetilde E\setminus \widetilde P)\cap D|\ge (1/2-\delta^2)|D|$. Also, by $(\ref{controly})$ and the fact that $a\le \delta$, we get, for any $x\in D$,
	\[|\tilde y-x|=\sqrt{|\tilde y'-x'|^2+|\tilde y^n-x^n|^2}\le \sqrt{(3\delta)^2 +(2a)^2}\le 4\delta.\]
	Using the lower bound $K_{g(y)}(x,y)\ge c_{n,s}|x-y|^{-(n+s)}$ and the fact that the volume of $D$ is a multiple of $a\delta^{n-1}$ by a dimensional constant, we infer
	\begin{equation} \label{I4}I_4\ge c_{n,s}2^{ks}(1/2-\delta^2)\frac{a\delta^{n-1}}{(4\delta)^{n+s}}\ge c_{n,s}\delta^{-1-s}2^{k(s-\alpha)},\end{equation}
	provided that $a\le \delta <\frac 12$. Combining (\ref{I3}) and (\ref{I4}) we get
	\[\mathrm{p.v.}\int_{B_{2^{-k-1}}(y)} \big(\chi_{E}(x)-\chi_{\mathcal C E}(x)\big) K_{g(y)}(x,y) \mathrm dx \ge 2^{k(s-\alpha)}(-C_{n,s}+c_{n,s}\delta^{-1-s}),\]
	given $a\le \delta$. Finally, combining this lower bound with \eqref{eq:nonlocal contrib at last} we obtain, up to increasing $C_{n,s}$,
	\[\begin{array}{ll}\mathrm{p.v.}\displaystyle\int_{\mathbf{R}^n} \big(\chi_E(x)-\chi_{\mathcal C E}(x)\big)K_{g(y)}(x,y) \mathrm dx& \ge  2^{k(s-\alpha)}(-C_{n,s}+c_{n,s}\delta^{-1-s})\\ &\ge 2^{k(s-\alpha)}.\end{array} \]
	where the last inequality holds for $\delta$ small enough, depending on $C_0$,$n$,$s$,$\alpha$, and $k\ge k_1=k_1(\delta)$ is large enough to ensure $a\le \delta$. We reach a contradiction to the NMC boundedness (\ref{EulerLagrange}) when $k$ is large, thus proving \eqref{eq: claim inclusion}, which is the content of the Proposition.
\end{proof}


\subsection{Iterating Harnack inequality} Consider a subset $E\subset \mathbf R^n$ satisfying the assumptions of Proposition \ref{prop:harnack}. When $k\gg k_1$ we can apply Harnack inequality multiple times to control the oscillations of $\partial E$ in the normal direction. This procedure is described in the following. We point out that it can already be found in great detail in the literature when $g$ is the Euclidean metric, see e.g. \cite[Chapter 5]{Lombardini}. Our setup is slightly different, since we are dealing with sets of possibly nonzero NMC for a nonconstant metric (instead of sets of vanishing NMC for the Euclidean metric). In particular, we have to check that NMC remains adequately bounded when iterating Harnack inequality.

As before, we introduce the rescaled set $\widetilde E=2^k E$. Letting $e_j:=\nu_{k-j}$, for $j=0,\ldots,k$ we have
\[\{x\cdot e_j\le -a 2^{j(1+\alpha)}\}\subset \widetilde E\subset \{x\cdot e_j\le a 2^{j(1+\alpha)}\}~\text{in $B_{2^{j}}(0)$}.\]
We apply Harnack inequality to the set $E$, and assume without loss of generality that the first conclusion of Proposition \ref{prop:harnack} holds, namely that $\{x^n\le (-1+\delta^2)a\}\subset \widetilde E$ in $\{|x'|\le \delta\}\times \{ |x^n|\le a\}$. We translate $\widetilde E$ downwards by a distance $t=\frac 12a\delta^2$, \emph{i.e.} consider $\widetilde E_t=\widetilde E-t e_n$. Then, 
\begin{equation}\label{eq: Harnack translaté} \Big\{x^n\le - a\big(1-\frac{\delta^2}{2}\big)\Big\}\subset \widetilde E_t\subset \Big\{x^n\le  a\big(1-\frac{\delta^2}{2}\big)\Big\}~\text{in $B_\delta(0)$}.\end{equation}
Now, let $\widetilde F:=\frac 1\delta \widetilde E_t$, and define $k'$ by
\begin{equation} \label{defk'} k'=\max\Big \{j\ge 0, \ 2^{-j\alpha}\ge 4 \frac{1-\delta^2/2}{\delta}2^{-k\alpha}\Big\}.\end{equation} 
Finally, let $F:=2^{-k'}\widetilde F$. Then $F$ satisfies the assumptions of Proposition \ref{prop:harnack} with $k$ replaced by $k'$. More precisely, we have:
\begin{prop} There is an explicit metric $h$ on $\mathbf R^n$ satisfying $\frac 12\le h\le 2$ and $\|\nabla h_{ij}\|_{L^\infty(\mathbf R^n)}\le 1$, such that $F$ has NMC bounded by $C_0$ in $B_1(0)$ in the viscosity sense, for the metric $h$. Also, there are unit vectors $(\nu_j')_{0\le j\le k'}$, with $\nu_{k'}'=e_n$, such that
	\[\{x\cdot \nu_j'\le -2^{-j(1+\alpha)}\}\subset  F\subset \{x\cdot \nu_j'\le 2^{-j(1+\alpha)}\} \ \text{in $B_{2^{-j}}(0)$},\]
 for any $j\in \{0,\ldots,k'\}$.\end{prop}

\begin{proof} \emph{1. Inclusion in flat cylinders.} Here the argument is the same as in the Euclidean setting (see \cite[Chapter 5]{Lombardini}), since no metric is involved. Setting $a':=2^{-k'\alpha}$, \eqref{defk'},\eqref{eq: Harnack translaté} imply
	\begin{equation}\label{inclusion 0}\{x^n\le -a'\}\subset \widetilde F\subset \{x^n\le a'\} \ \text{in $B_1(0)$.}\end{equation}
	Since $t\le a$, for $j\in \{0,\ldots,k-1\}$ we have,
	\begin{equation}\label{eq: translaté nouvelles inclusions} \{x\cdot e_{j+1}\le -2^{2+\alpha}a 2^{j(1+\alpha)}\}\subset \widetilde E_t\subset \{x\cdot e_{j+1}\le 2^{2+\alpha}a 2^{j(1+\alpha)}\} \ \text{in $B_{2^j}(0)$}.\end{equation}
	We can always take $\delta$ of the form $\delta=2^{-M_0}$ for some integer $M_0$. Notice that as long as $0\le j-M_0\le k-1$, we have
	\[\widetilde F\cap B_{2^j}(0)\subset 2^{M_0}\big(\widetilde E_t\cap B_{2^{j-M_0}}(0)\big).\]
	Recalling \eqref{eq: translaté nouvelles inclusions}, we set $e_j'=e_{j+1-M_0}$ for $j\in\{M_0,\ldots,M_0+k-1\}$, and take $M_0$ large enough to ensure $M_0\alpha>2+\alpha$. Since $a\le a'$, we obtain for $j\in \{M_0,\ldots,k+M_0-1\}$,
	\begin{equation}\label{inclusions M0 to k} \{x\cdot e_{j}'\le -2^{j(1+\alpha)} a' \}\subset \widetilde F\subset \{x\cdot e_{j}'\le 2^{j(1+\alpha)} a' \} \ \text{in $B_{2^j}(0)$}.\end{equation}
	Now, if $j\in \{1,\ldots,M_0-1\}$ we have $B_{2^{j-M_0}}(0)\subset B_1(0)$ thus
	\[\widetilde F \cap B_{2^{j}}(0)\subset 2^{M_0}(\widetilde E_t\cap B_{1}(0)).\]
	Using \eqref{eq: translaté nouvelles inclusions} with $j=0$ together with the fact that $2^{1+M_0}a=\frac{2a}{\delta}\le a'$ (it follows from the definition of $k'$, assuming $\delta<1$), we can set $e_j'=e_1$ to get, for $j\in \{1,\ldots,M_0-1\}$,
	\begin{equation}\label{inclusions 1 to M0-1} \{x\cdot e_j'\le -2^{j(1+\alpha)} a'\}\subset \widetilde F \subset \{x\cdot e_j'\le 2^{j(1+\alpha)} a'\} ~ \text{in $B_{2^j}(0)$}.\end{equation}
	Since $k'\le k+M_0-1$, by combining \eqref{inclusion 0}, \eqref{inclusions M0 to k}, \eqref{inclusions 1 to M0-1} we obtain
	\[\{x\cdot e_j'\le -2^{j(1+\alpha)}a'\}\subset \widetilde F\subset \{x\cdot e_j'\le 2^{j(1+\alpha)}a'\} \ \text{in $B_{2^j}(0)$},\]
	for any $j\in\{0,\ldots, k'\}$. Setting $\nu_j'=e_{k'-j}'$ gives the claimed inclusions.

	\emph{2. The set $F$ has bounded NMC in $B_1(0)$.} Unraveling the definition of $F$, we see that $E=\varphi(F)$, with $\varphi(x)=2^{-k}(2^{k'}\delta x-t)$.
	Then by Remark \ref{rem: scaling H_s}, whenever $E$ has a tangent ball at $x\in \partial E$ we have
	\[\mathrm{H}_s^{\varphi^*g}[F](\varphi^{-1}(x))=\mathrm{H}_s^g[E](x).\]
	Now just check that $(\varphi^* g)(x)=\lambda^{-2} g(\varphi(x))$, with $\lambda=2^{k-k'}\delta^{-1}>1$. Thus, by setting $h(x)=\lambda^2 \varphi^*g(x)=g\big(2^{-k}(2^{k'}\delta x-t)\big)$ we have by Remark \ref{rem: scaling H_s} again, whenever $F$ has a tangent ball at $y\in \partial F$,
	\[\mathrm{H}_s^{h}[F](y)=\lambda^{-s}\mathrm{H}_s^{\varphi^* g}[F](y)=\lambda^{-s}\mathrm{H}_s^g[E](\varphi(y)).\]
	One can check that $B_1(0)\subset \varphi^{-1}(B_1(0))$, thus $F$ has NMC bounded by $C_0$ in the viscosity sense in $B_1(0)$ for the metric $h$. To conclude the proof, just check that $\frac 12\le h\le 2$ and $\|\nabla h_{ij}\|_{L^\infty(\mathbf R^n)}\le \lambda^{-1}\le 1$. \end{proof} 

We can apply Harnack inequality to $F$ provided that $k'\ge k_1$. Let us now explain the iteration argument. Let $\theta:=1-\frac{\delta^2}{2}$. Define $(k^{(j)})$ inductively by setting $k^{(0)}=k$ and
\[k^{(j+1)}=\max\Big\{m\ge 0, \ 2^{-m\alpha}\ge \frac{4\theta}{\delta} 2^{-k^{(j)}\alpha}\Big\}.\] 
We can iterate Harnack inequality as long as $k^{(j)}\ge k_1$, where $k_1$ is the threshold after which we fail to check the assumptions of Proposition \ref{prop:harnack}. After iterating Harnack inequality $j$ times, we get that $\partial \widetilde E$ is trapped in a cylinder of height $\theta^j a$ in $B_{\delta^j}'(0)\times [-a,a]$, whose center isn't necessarily $0$, but lies on $\{x'=0\}$. In other words, at each step there is some $z_j\in [-a,a]$ such that in $ \{|x'|\le \delta^j\}\times \{|x^n|\le a\}$ we have the inclusions
\[\{x^n\le z_j\}\subset \widetilde E\subset \{x^n\le z_j+\theta^j a\}.\]

Denote by $j(a)$ the maximum value of $j$ for which $k^{(j)}\ge k_1$. Then, as $k\to +\infty$ (or equivalently $a\to 0$) we have
\[j(a)\sim \frac{k}{\lceil \alpha^{-1} \log(4\theta/\delta) \rceil},\]
where $\lceil\cdot \rceil$ denotes the ceiling function. 

\begin{prop}\label{approximation partialE} Let $\gamma\in (0,1)$ be defined by $\theta=\delta^\gamma$. Let $\{E_k\}$ be the sequence of sets introduced in (\ref{introset}). Denote by $B_{1/2}^*$ the ball $B_{1/2}$ stretched by a factor $\frac{1}{a_k}=2^{k\alpha}$ in the $x^n$ direction. Then, for any $x,y\in \partial \widetilde E_k^*\cap B_{1/2}^*$ we have
	\[|y^n-x^n|\le C\max(b_k^\gamma,|y'-x'|^\gamma),\]
	with $b_k\to 0$ as $k\to +\infty$, and $C=C_\delta$.\end{prop}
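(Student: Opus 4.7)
The strategy is to iterate the Harnack inequality of the previous subsection around every base point in $\partial \widetilde E_k \cap B_{1/2}$, and then translate the resulting oscillation bounds into a pairwise H\"older comparison. I will work in $\widetilde E_k$ coordinates (before the vertical stretch by $1/a_k$): since dividing by $a_k$ recovers the $\widetilde E_k^*$ statement, the assertion to prove is
\[|Y^n - X^n| \le C a_k \max(b_k^\gamma, |X'-Y'|^\gamma) \quad \text{for } X, Y \in \partial \widetilde E_k \cap B_{1/2}.\]

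First, I would fix $X \in \partial \widetilde E_k \cap B_{1/2}$ and iterate Harnack around $X$. Since $X/2^k \in \partial E_k \cap B_{2^{-k-1}}(0)$, the shift argument recorded just before the proposition guarantees that the flatness tower (\ref{introset}) persists around $X$, up to a harmless factor of $2$. Applying the Harnack inequality repeatedly --- rescaling the ball $B_{\delta^j}(X)$ to the unit ball at each step --- I expect to produce real constants $c_j(X)$ with
\[\partial \widetilde E_k \cap B_{\delta^j}(X) \subset \{|x^n - c_j(X)| \le C\theta^j a_k\}, \quad j = 0, 1, \ldots, j(a_k),\]
where $j(a_k)$ is the largest index for which the normalized flatness $(\theta/\delta)^j a_k$ stays below the threshold required by Harnack. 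A short computation using $\theta = \delta^\gamma$ gives $b_k := \delta^{j(a_k)} \sim a_k^{1/(1-\gamma)}$, which tends to $0$. The main obstacle sits here: after rescaling $B_{\delta^j}(X)$ to $B_1$, one must verify that the rescaled set still inherits the full tower of flatness hypotheses required by Harnack, with constants uniform in $X$ and $j$; the paragraph preceding the proposition already sketches the shift, and the remaining work is bookkeeping.

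Granting the iteration, the pairwise comparison is essentially algebraic. Given $X, Y \in \partial \widetilde E_k \cap B_{1/2}$, set $w := |X - Y|$. If $w \ge b_k$, pick the unique $j \le j(a_k)$ with $\delta^{j+1} \le w \le \delta^j$; then $Y \in B_{\delta^j}(X)$ and the inclusion at level $j$ forces $|X^n - Y^n| \le 2C\theta^j a_k \le C' a_k w^\gamma$, using $\theta^j = (\delta^j)^\gamma$. If instead $w < b_k$, the inclusion at level $j(a_k)$ gives $|X^n - Y^n| \le 2Ca_k b_k^\gamma$. Either way,
\[|X^n - Y^n| \le C_1 a_k \max(b_k^\gamma, w^\gamma).\]

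Finally, I would exchange $w$ for $|X'-Y'|$. With $u := |X^n - Y^n|$ and $v := |X'-Y'|$, one has $w \le u + v$, hence $w^\gamma \le u^\gamma + v^\gamma$ by subadditivity (since $\gamma \in (0,1)$). Substituting yields $u \le C_1 a_k(b_k^\gamma + v^\gamma + u^\gamma)$. In the regime $u \ge (2C_1 a_k)^{1/(1-\gamma)}$ the $u^\gamma$-term can be absorbed into the left side, giving $u \le 2C_1 a_k(b_k^\gamma + v^\gamma)$; in the opposite regime, the identity $a_k b_k^\gamma \sim a_k^{1/(1-\gamma)}$ (immediate from $b_k \sim a_k^{1/(1-\gamma)}$) gives $u \lesssim a_k b_k^\gamma$. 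Combining, $u \le C_2 a_k \max(b_k^\gamma, v^\gamma)$, which is the claim after dividing by $a_k$.
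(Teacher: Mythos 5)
Your proof is correct and follows essentially the same route as the paper's: iterate Harnack around each base point in $\partial\widetilde E_k\cap B_{1/2}$ (using the shift lemma recorded just before the proposition), observe that the oscillation decreases geometrically until the flatness threshold forces the iteration to stop, and read off the two cases (near and far) of the H\"older estimate. Your argument is more detailed than the paper's two-sentence proof; in particular, the final subadditivity-plus-absorption step converting control in terms of $|X-Y|$ (the radius of the Harnack ball) into control in terms of $|X'-Y'|$ fills in a point the paper leaves implicit, and the explicit identification $b_k\sim a_k^{1/(1-\gamma)}$ is a harmless refinement.
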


\begin{proof}We start by proving the result for points $y$ lying on $\{y'=0\}$. Accordingly, let $ y^n\in [-1,1]$ be such that $(0, y^n)\in \partial \widetilde E\cap B_1(0)$. Take $x=(x',x^n)\in \partial \widetilde E\cap B_1(0)$. Assume $\delta^{j+1}\le |x'|\le \delta^{j}$ for some $j\le j(a)$. Then, according to the previous discussion, we have $|x^n-y^n|\le \theta^j a=\delta^{j\gamma}a\le \delta^{-\gamma} |x'|^\gamma a$. When $|x'|<\delta^{j(a)}$ we just use $|x^n-y^n|\le \theta^{j(a)}a$. Altogether, we have
	\[|x^n-y^n|\le C_\delta a \min( \theta^{j(a)}, |x'|^\gamma), \qquad \text{for any $(x',x^n)\in \partial \widetilde E\cap B_1(0)$},\]
	with $\theta^{j(a)}\to 0$ as $k\to +\infty$ (or equivalently as $a\to 0$).
	
	Now, if $y$ is any point of $ B_{1/2}(0)$, we notice that we can apply the iteration procedure to the set $\widetilde E-y$ and obtain the same estimates. Indeed, for such points we have
	\[|y\cdot e_j|\le a 2^{j(1+\alpha)}, \qquad \text{for any $j\in \{0,\ldots,k\}$.}\]
	Hence, for any $x\in \partial \widetilde E\cap B_{2^{j-1}}(y)$, since $x\in \partial \widetilde E\cap B_{2^{j}}(0)$ we have 
	\[|(y-x)\cdot e_j|\le 2a 2^{j(1+\alpha)}, \qquad \text{for any $j\in \{0,\ldots,k\}$},\]
	thus we are in the setting of Proposition \ref{prop:harnack} (with slightly increased constants, but this is harmless), and we can iterate Harnack inequality starting from $\widetilde E-y$. \end{proof}

\begin{rem}We cannot write $|y^n-x^n|\le C\max(b_k^\gamma,|y'-x'|^\gamma)$ for any $(x,y)\in \partial \widetilde E_k^*\cap (B_{1/2}'(0)\times \mathbf R)$ because we have no information about the set $ \widetilde E_k$ outside $B_{2^k}(0)$. This causes no harm because we will prove convergence of the sequence on any compact subset of $B_{1/2}'(0)\times \mathbf R$, and any of these is contained in $B_{1/2}^*$ for $k$ large enough.\end{rem}
\subsection{Convergence to a limit function}

Using Proposition \ref{approximation partialE}, an application of Arzel{\`a}--Ascoli theorem allows to show compactness of the sequence $\{ \widetilde E_k^*\}$. The proof is done in \cite{CRS} but we provide a bit more details.
\begin{prop}\label{convergence}Up to a subsequence, $\{\widetilde E_{k}^*\}$ converges uniformly on compact subsets of $B_{1/2}'\times \mathbf{R}$ to the subgraph of a Hölder continuous function $f:B_{1/2}'(0)\to \mathbf R$. More precisely, there is a sequence of integers $k_j\to +\infty$ such that for any $\varepsilon>0$ and any compact subset $K\subset B_{1/2}'(0)\times \mathbf R$, if $j$ is large enough, we have
	\[\{x^n\le f(x')-\varepsilon\}\subset \widetilde E_{k_j}^*\subset \{x^n\le f(x')+\varepsilon\}~\text{in $K$}.\]\end{prop}

\begin{rem}An immediate consequence is that for any compact subset $K\subset B_{1/2}'\times \mathbf R$, 
	\[\sup_{(x',x^n)\in \partial \widetilde E_{k_j}^*\cap K} |x^n-f(x')|\underset{j\to +\infty}\longrightarrow 0,\]
 \emph{i.e.} the sequence of boundaries $(\widetilde \partial E_{k_j}^*)$ converges uniformly on compact subsets of $B_{1/2}'\times \mathbf R$ to the graph of $f$.
	This result is weaker than Proposition \ref{convergence}, and they are \emph{a priori} not equivalent since the sets $E_k$  may not enjoy uniform density estimates, as we discussed in Remark \ref{rem: uniform density }. \end{rem}

\begin{proof}[Proof of Proposition \ref{convergence}]
	Let $C$ be as in Proposition \ref{approximation partialE} and let \[f_k^+(x'):=\sup \big\{y^n-C|y'-x'|^\gamma, \ (y',y^n)\in \partial \widetilde E_k^*\cap B_{1/2}^*\big\}\] and \[f_k^-(x'):=\inf \big\{y^n+C|y'-x'|^\gamma, \ (y',y^n)\in \partial \widetilde E_k^*\cap B_{1/2}^*\big\}.\]
	By construction, we have
	\begin{equation}\label{byconstruction}\{x^n\le f_k^-(x')\}\subset \widetilde E_k^*\subset \{x^n\le f_k^+(x')\}~\text{in $B_{1/2}^*$}.\end{equation}
	
	\textit{Step 1. The functions $f_k^+$ and $f_k^-$ are $\gamma$-Hölder continuous}, with uniform Hölder estimates that are independent of $k$. It is clear because $f_k^+$ (resp. $f_k^-$) is defined as the supremum (resp. infimum) of a family of uniformly $\gamma$-Hölder functions.
	
	\textit{Step 2. Controlling $|f_k^+-f_k^-|$.} Let us show
	\begin{equation}\label{diff holder} |f_k^+-f_k^-|\le Cb_k^\gamma,\end{equation}
	where $C$ is above. We only prove that $f_k^+\le f_k^-+Cb_k^\gamma$ since the other inequality is handled similarly. For any $y=(y',y^n)$ and $z=(z',z^n)$ in $\partial \widetilde E_k^*\cap B_{1/2}^*$ we have, according to Proposition~\ref{approximation partialE},
	\[y^n-z^n\le C\max(b_k^\gamma,|y'-z'|^\gamma)\le Cb_k^{\gamma}+C|x'-y'|^\gamma+ C|x'-z'|^\gamma.\]
	We infer 
	\[y^n-C|x'-y'|^\gamma \le z^n+ C|x'-z'|^\gamma+Cb_k^{\gamma}.\]
	Taking infimum in $z$ and supremum in $y$ yields $f_k^+(x')\le  f_k^-(x')+Cb_k^{\gamma}.$ 
	
	\textit{Step 3. Letting $k\to +\infty$.} By Arzel{\`a}--Ascoli theorem, up to a subsequence, $(f_k^+)$ and $(f_k^-)$ converge in $L^{\infty}_{\mathrm{loc}}$ to $\gamma$-Hölder continuous functions $f^+$ and $f^-$. It follows from (\ref{diff holder}) that $f^+=f^-$. We conclude the proof by combining \eqref{byconstruction} with the fact that $B_{1/2}^*$ contains any given compact subset of $B_{1/2}'\times \mathbf{R}$ for sufficiently large $k$.
\end{proof}
Actually, the estimates above can be conducted in larger and larger balls. Indeed, \eqref{introset} implies that 
\begin{equation}\label{diff vectors}|\nu_j^k-\nu_{j+1}^{k}|\le C2^{-j\alpha},\end{equation}
for some constant $C$ independent of $j$ and $k$. Fix $l\in \mathbf N$. Recalling $\nu_k^k=e_n$, we infer from \eqref{diff vectors} that $|e_n-\nu_{k-l}^k|\le C 2^{-(k-l)\alpha}$, implying that in $B_{2^{-(k-l)}}$ we have
\[ \{x^n\le -C2^{-(k-l)(\alpha+1)}\}\subset E_k\subset \{x^n\le C2^{-(k-l)(\alpha+1)}\},\]
where $C$ depends only on $n$. Proposition \ref{convergence} then shows that, up to a subsequence, the sequence of sets
\[2^{k-l}\Big\{\big(x',\frac{x^n}{a_{k-l}}\big) \ \big| \ (x',x^n)\in E_k\Big\}\]
converges, uniformly on compact subsets of $B_{1/2}'\times \mathbf R$, to the subgraph of a Hölder continuous function. After proper rescaling this shows that, up to a subsequence, $\{ \widetilde E_k^*\}$ converges, on any compact subset of $B_{2^{l-1}}'\times \mathbf R$, to the subgraph of a Hölder continuous function. Since this holds for any $l$, we obtain the following proposition.

\begin{prop}\label{convergencecompact} Up to a subsequence, $\{\widetilde E_k^*\}$ converges, uniformly on compact subsets of $\mathbf R^n$, to the subgraph of a continuous function $f:\mathbf{R}^{n-1}\to \mathbf{R}$. Moreover, $f(0)=0$ and we have a growth estimate
	\[|f(x')|\le C(1+|x'|^{1+\alpha}).\]\end{prop}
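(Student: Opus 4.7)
My plan is to extend the local Hölder estimates of Proposition~\ref{approximation partialE} and the convergence of Proposition~\ref{convergence} to exhaustions of $\mathbf R^{n-1} \times \mathbf R$, while reading the growth bound off the cylinder inclusions at all admissible scales.

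First, I would re-express the cylinder inclusions in the fixed direction $e_n$. Since $\nu_k^k = e_n$ and $|\nu_j^k - \nu_{j+1}^k| \le C\, 2^{-j\alpha}$ with $\alpha > 0$, a telescoping geometric sum yields $|\nu_j^k - e_n| \le C'\, 2^{-j\alpha}$ for every $0 \le j \le k$. Combined with the bound $|x \cdot \nu_j^k| \le 2^{-j(\alpha+1)}$ on $\partial E_k \cap B_{2^{-j}}$, this promotes the hypothesis to
\[\partial E_k \cap B_{2^{-j}} \subset \{|x^n| \le C\, 2^{-j(\alpha+1)}\},\]
with $C$ independent of $j, k$. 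After dilating by $2^k$ and stretching the $x^n$-axis by $1/a_k = 2^{k\alpha}$, this rewrites as
\[\partial \widetilde E_k^* \cap \{|x'| \le 2^l\} \subset \{|x^n| \le C\, 2^{l(1+\alpha)}\} \qquad \text{for } 0 \le l \le k,\]
provided $k$ is large enough in terms of $l$, since the stretched image of $B_{2^{k-j}}$ is the ellipsoid $\{|x'|^2 + a_k^2 (x^n)^2 \le 2^{2l}\}$, which swallows any fixed Euclidean cylinder as $a_k \to 0$. Passing to the limit will yield the growth estimate $|f(x')| \le C(1+|x'|^{1+\alpha})$.

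Next, I would iterate the Harnack inequality starting from any admissible scale, not just the initial one. The uniformity in $r \le 1$ and in the contact point $y$ emphasized after the proof of the Harnack inequality means that the same improvement factor $\delta$ and exponent $\gamma$ (defined by $1-\delta^2/2 = \delta^\gamma$) appear at each iterated step regardless of the starting scale. Transferred to the stretched picture this delivers, for every $R > 0$, a constant $C_R$ and a sequence $b_k \to 0$ such that
\[|y^n - x^n| \le C_R \max\bigl(b_k^\gamma,\, |y'-x'|^\gamma\bigr)\]
for all $x, y \in \partial \widetilde E_k^* \cap \bigl(\overline{B'_R} \times [-CR^{1+\alpha}, CR^{1+\alpha}]\bigr)$ and $k$ large enough, exactly as in Proposition~\ref{approximation partialE}. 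On each such compact set the upper/lower envelopes $f_{k,R}^\pm$ from the proof of Proposition~\ref{convergence} are equi-$\gamma$-Hölder continuous and uniformly bounded, so Arzelà--Ascoli provides a convergent subsequence; a diagonal extraction along an exhaustion $R_j \to \infty$ then produces a single subsequence along which $\partial \widetilde E_k^*$ converges, locally uniformly, to the graph of a continuous function $f : \mathbf R^{n-1} \to \mathbf R$, locally $\gamma$-Hölder continuous. The normalization $f(0) = 0$ follows from $(0,0) \in \partial \widetilde E_k^*$ for every $k$, and the growth estimate is inherited by passing to the limit in the inclusion above.

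The hard part is the second step: checking that the Harnack iteration of Proposition~\ref{approximation partialE} can be restarted at every dyadic scale with constants uniform in $k$, and with the residual error $b_k$ still going to zero. This reduces to the uniformity (in $r \le 1$ and in the contact point) of the Harnack improvement $\delta$, itself guaranteed by the uniform bound $C_0 r^{-s}$ on the nonlocal mean curvature in every rescaled ball.
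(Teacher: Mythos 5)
Your argument is correct and follows essentially the same route as the paper: extend the cylinder inclusions to all dyadic scales via the telescoped bound $|\nu_j^k - e_n| \lesssim 2^{-j\alpha}$, shift the Harnack iteration to get locally uniform H\"older estimates on larger compacts, apply Arzel\`a--Ascoli with a diagonal extraction, and read off the growth bound from the inclusions. The only cosmetic difference is that you promote each slab to have normal $e_n$ before passing to the limit, whereas the paper keeps the normals $e_l^k$ and records the tilt vectors $p_l^k = a_k^{-1}\bigl(e_l^k - \langle e_l^k, e_n\rangle e_n\bigr)$ together with their subsequential limits $p_l$; both give the bound $|f(x')|\le C\,2^{l(1+\alpha)}$ on $B'_{2^l}$ and hence the stated growth condition.
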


\begin{proof} The first point is a standard diagonal argument. The fact that $f(0)=0$ follows from the assumption  $0\in \partial E_k$. To prove the growth estimate, we work with the rescaled sets $\partial \widetilde E_k$.
	Recall that, letting $e_l^k:=\nu_{k-l}^k$ we have
	\[\partial \widetilde E_k\cap B_{2^l}(0)\subset \{|x\cdot e_l^k|\le a_k2^{l(1+\alpha)}\}, \qquad \text{for any $l\in \{0,\ldots,k\}$},\]
	with $e_0^k=e_n$ and $|e_l^k-e_n|\le C a_k2^{l\alpha}$. Let $p_l^k:=\frac{1}{a_k}(e_l^k-(e_l^k\cdot e_n) e_n)$. Then $|p_l^k|\le C2^{l\alpha}$, and one can show that
	\[\partial \widetilde E_k\cap B_{2^l}(0)\subset \left\{(x',x^n), \ \left|\frac{x^n}{a_k}+p_l^k\cdot x'\right|\le C2^{l(1+\alpha)}\right\}.\]
	Since $(p_l^k)_k$ is bounded, as $a_k\to 0$, up to extracting a subsequence, $(p_{l}^{k})_k$ converges to $p_l\in \mathbf{R}^{n-1}$ and we find that
	\[|f(x')+p_l\cdot x'|\le C2^{l(1+\alpha)}\]
	in $B_{2^l}'(0)$. We conclude that for any $l\ge 0$, we have 
	\[|f(x')|\le C2^{l\alpha}(|x'|+2^l) ~\text{in $B_{2^l}'(0)$},\]
	hence the growth estimate on $f$.
\end{proof}

\subsection{The limit function is linear}


Let us recall the definition of viscosity solutions to $\Delta^su=0$ (here $\Delta$ denotes the standard Euclidean Laplacian).

\begin{defi}Let $s\in (0,1)$, and let $u$ be a measurable function $u:\mathbf R^n\to \mathbf R^n$ satisfying the integrability condition
	\[\int \frac{|u(x)|}{(1+|x|^2)^{\frac{n+s}{2}}}\mathrm dx<+\infty.\]
	Then, for any $x\in \mathbf R^n$, the fractional Laplacian  $\Delta^su(x)$ (defined in \eqref{eq: fracLap pv}) is well-defined as a principal value as long as $u$ is touched by above or below by a smooth function at $x$.
	
	We say that $\Delta^s u=0$ in the viscosity sense if $\Delta^s u(x)\le 0$ whenever $u$ is touched by below by a smooth function at $x$, and $\Delta^s u(x)\ge 0$ whenever $u$ is touched by above by a smooth function at $x$. \end{defi}

We shall use the following result:
\begin{prop}[{\cite[Proposition 6.7]{CRS}}] Assume $u:\mathbf R^{n-1}\to \mathbf R$ satisfies the growth condition $|u(x)|\le 1+|x|^{1+\alpha}$ for some $\alpha<s$, and $\Delta^{\frac{1+s}{2}}u=0$ in the viscosity sense in $\mathbf R^{n-1}$. Then $u$ is linear. \end{prop}

\begin{rem}By performing a linear change of variables, we see that the result still holds when replacing the Euclidean Laplacian by the Laplacian associated to any constant metric.\end{rem}

 Denote by $\iota$ the embedding $\mathbf R^{n-1}\to \mathbf R^n, x'\mapsto (x',0)$. If $g$ is a metric on $\mathbf R^n$, it induces by pullback a metric $\iota^*g$ on $\mathbf R^{n-1}$.
	
	\begin{prop}\label{limlinear}There is a constant metric $h$ on $\mathbf R^n$ such that the limit function $f$ satisfies
		\[\Delta_{\iota^*h}^{\frac{1+s}{2}}f=0\]
		in the viscosity sense; hence $f$ is linear. \end{prop}

\begin{proof}We refer again to \cite{CRS} for the proof in the Euclidean setup. Fix $\varepsilon>0$. Let $\varphi$ be a smooth function that touches $f$ by below at a point $\tilde y_0=2^ky_0$. It means that $f(\tilde y_0')=\varphi(\tilde y_0')$ and
	\[0\le f(x')-\varphi(x') \ \ \ \ \text{for any $x'\in \mathbf{R}^n$.}\]
	Fix $R=2^l>0$ large enough and $\delta>0$ small. According to Proposition \ref{convergencecompact}, one can find $E=E_k$, and $a=a_k$ very small (we take $a=a(\varepsilon)$ such that $a\to 0$ when $\varepsilon\to 0$), such that $\widetilde E=2^k E$ is included in a $a\varepsilon$-neighborhood of the subgraph of $a f$ (and vice versa) in $D_R(\tilde y_0):=\{|x'-\tilde y_0'|\le R\}\times \{|x^n-\tilde y_0^n|\le R\}$. It means that
	\begin{equation}\label{f proche de E} \{x^n\le af(x')-a\varepsilon\}\subset \widetilde E\subset \{x^n\le af(x')+a\varepsilon\} \ \text{in $D_R(\tilde y_0)$}.\end{equation}
	We take $a,\varepsilon$ small enough to ensure $a,\varepsilon\ll \delta$.
	The strategy is now to find a graph that touches $\partial E$ near $y_0$; since $E$ has bounded NMC we will deduce that $f$ is a viscosity solution to a nonlocal equation in one dimension less.  To achieve this, we introduce $\psi(x')=\varphi(x')-|x'-\tilde y_0'|^2$. It pushes points that are far from $\tilde y_0$ away from the graph of $af$, hence from $\partial \widetilde E$. Then, $\partial \widetilde E$ is touched by below by a vertical translation of $a\psi$ at a point $\tilde y_1$ (depending on $k$) and one can show
	\begin{equation} \label{dist y1y2}|\tilde y_1'-\tilde y_0'|^2\le \varepsilon.\end{equation}
	Notice that we can shift the estimate (\ref{f proche de E}) to $\tilde y_1$ to obtain, up to replacing $R$ by $R-\varepsilon$, 
	\begin{equation}\label{f proche de E'}\partial \widetilde E\cap D_R(\tilde y_1)\subset \{\tilde y_1+z : \ |z^n-a(f(\tilde y_1'+z')-f(\tilde y_1'))|\le 2a\varepsilon\}.\end{equation}
	Thanks to Proposition \ref{approxeulerlagrange}, the NMC boundedness assumption at the point $ y_1=2^{-k}\tilde y_1$ gives
	\begin{equation}\label{NMCy1}\mathrm{p.v.}\int \big(\chi_E(x)-\chi_{\mathcal CE}(x)\big)K_{g_k(y_1)}(x, y_1) \mathrm dx\le C,\end{equation}
	with $C$ depending only on $C_0,n,s$. We will split \eqref{NMCy1} in three terms. The first one consists of points far from $y_1$, whose contribution is controlled by tail estimates. The second one consists of points located in a small neighborhood of $y_1$. We obtain a bound by below because $\partial E$ is touched by below by a smooth graph at the point $y_1$. The last term consists of points $x$ living at the intermediate scale $d(\tilde x,\tilde y_1)\in ]\delta,R[$. In this region we will use that $\partial \widetilde E$ is close to the graph of $af$. 
	
	Again, we let, for $r>0$, 
	\[D_r(y_1):=\{|x'- y_1'|\le r\}\times \{|x^n-y_1^n|\le r\}.\]
	First we estimate
	\[I_1:=\mathrm{p.v.}\int_{D_{\delta2^{-k}}(y_1)} (\chi_E-\chi_{\mathcal C E})(x)K_{g_k(y_1)}(x, y_1)\mathrm dx.\]
	In $D_{\delta}(\tilde y_1)$, we use the fact that $\partial \widetilde E$ is touched by below by a vertical translation $\widetilde P$ of the subgraph of $a\psi$. Since $\chi_{\widetilde E}-\chi_{\mathcal C\widetilde E}\ge \chi_{\widetilde P}-\chi_{\mathcal C\widetilde P}$ in $D_\delta(\tilde y_1)$, as in (\ref{I3}) we get,
	\[I_1\ge 2^{ks}\mathrm{p.v.}\int_{D_{\delta}(\tilde y_1)} (\chi_{\widetilde P}-\chi_{\mathcal C \widetilde P})( x)K_{g_k(y_1)}( x, \tilde y_1)\mathrm d x\ge -2^{ks} C_{n,s,\psi}a \delta^{1-s}.\]
	Let $I_2$ denote the contribution of points in $\mathcal C D_{R2^{-k}}(y_1)$ to \eqref{NMCy1}, that is
	\[I_2:=\int_{\mathcal C D_{R2^{-k}}(y_1)} (\chi_E-\chi_{\mathcal C E})(x)K_{g_k(y_1)}(x, y_1)\mathrm dx.\]
	Combining $(\ref{tail})$ and $(\ref{dyadic})$, we have for $k\ge l$ (recall that $R=2^l$),
	\[|I_2|\le C_{n,s}+C_{n,s}2^{(k-l)(s-\alpha)}\le C_{n,s}2^{k(s-\alpha)}R^{\alpha-s}.\]
	It remains to compute
	\[I_3=\int_{D_{R2^{-k}}(y_1)\setminus D_{\delta2^{-k}}(y_1)} (\chi_E-\chi_{\mathcal C E})(x)K_{g_k(y_1)}(x, y_1)\mathrm dx.\]
	We recall that $K_{g_k(y_1)}(x,y_1)=\alpha_{n,s}|x-y_1|_{g_k(y_1)}^{-(n+s)}$. One can check that if $x\in \partial \widetilde E\cap \big( D_{R}(\tilde y_1)\setminus D_{\delta }(\tilde y_1)\big)$ then
	\begin{equation}\label{borne2}\big| |x- \tilde y_1|_{g_k(y_1)}^{-(n+s)}-|x'-\tilde y_1'|_{g_k(y_1)}^{-(n+s)} \big|\le C_{R,\delta} a^2.\end{equation}
	Indeed, if $x\in \widetilde E\cap \big( D_{R}(\tilde y_1)\setminus D_{\delta }(\tilde y_1)\big)$ then $|x'|\ge \delta$ and $|x^n|\le C_Ra$, as follows from the fact that $\partial \widetilde E$ is close to the graph of $af$ in $D_{R}$. This implies \eqref{borne2}. Using the approximation (\ref{f proche de E'}) together with (\ref{borne2}) we obtain (recall $a=2^{-k\alpha}$)
	\[I_3= \alpha_{n,s}2^{k(s-\alpha)}\left(\displaystyle\int_{B_R'(0)\setminus B_{\delta}'(0)} \frac{2(f(z'+ \tilde y_1')-f(\tilde y_1'))}{|z'|_{g_k(y_1)}^{n+s}}\ \mathrm dz'+O(\varepsilon)+O(a)\right).\]
	Recall from (\ref{NMCy1}) that $I_1+I_2+I_3\le C$. Multiply both sides by $2^{k(\alpha-s)}$, then let $\varepsilon$ go to $0$ and $k$ go to $+\infty$ (implying $a=2^{-k\alpha}\to 0$) along an appropriate sequence $(k_j)_{j\ge 0}$ to obtain 
	\[\limsup_{k_j\to +\infty} \int_{B_R'(0)\setminus B_{\delta}'(0)}\frac{(f(z'+\tilde y_0')-f(\tilde y_0'))}{|z'|_{g_{k_j}(y_1)}^{n+s}}\ \mathrm dz'\le C_{n,s}(R^{\alpha-s}+\delta^{1-s}).\]
	Using (\ref{dist y1y2}) we have $\tilde y_1\to \tilde y_0$ as $k\to +\infty$. Also, $|y_1|\le 2^{-k+1}$. Up to extracting a subsequence, we can assume that $(g_{k_j}(0))$ converges to some positive definite matrix $h$. Then, since $\|\mathrm Dg_{k_j}\|_{L^\infty}\le 1$, we have
	\[\|g_{k_j}(y_1)-h\|\le \|g_{k_j}(0)-h\|+\|g_{k_j}(0)-g_{k_j}(y_1)\|\le  \|g_{k_j}(0)-h\|+2^{-k_j+1}.\]
	Therefore $\big(g_{k_j}(y_1)\big)$ also converges to $h$ as $k_j\to +\infty$. Since $f$ is continuous, we can pass to the limit inside the integral, giving
	\[\int_{B_R'\setminus B_{\delta}'}\frac{(f(z'+\tilde y_0')-f(\tilde y_0'))}{|z'|_h^{n+s}}\ \mathrm dz'\le C_{n,s}(R^{\alpha-s}+\delta^{1-s}).\] 
	We obtain the desired result by letting $\delta\to 0, R\to +\infty$. We stress out that the metric $h$ does not depend on the point $\tilde y_0$.
\end{proof}

\subsection{Proof of Theorems \ref{improvement} and \ref{mainthm}} 
\begin{proof}[Proof of Theorem \ref{improvement}] 
	By contradiction, assume that we have a sequence $\{E_k\}$ of sets with uniformly bounded nonlocal mean curvature\footnote{We recall that the metric may depend on $k$, although it always satisfies the assumptions of Theorem \ref{improvement}.} in $B_1$, with $a_k=2^{-k\alpha}\to 0$, such that for all $\nu\in \mathbf S^{n-1}$ one of the inclusions
	\[\{x\cdot \nu \le -a_k2^{-(\alpha+1)}\}\cap B_{1/2}\subset \widetilde E_k\cap B_{1/2} \subset \{x\cdot \nu \le a_k2^{-(\alpha+1)}\}\]
	fails. This contradicts the fact that, up to a subsequence, $\{\widetilde E_k^*\}$ converges uniformly in $B_{1/2}(0)$ to the subgraph of a linear function.  
	
\end{proof}
Now we are ready to prove our main theorem. This is a standard application of Theorem \ref{improvement}, but we include of proof for the sake of completeness.
\begin{proof}[Proof of Theorem \ref{mainthm}] We may assume $r=1$, since the general result follows by rescaling. 
	\item \textit{Step 1. } Take $\sigma= \frac 12 2^{-k(\alpha+1)}$ with $k\ge k_0$, where $k_0$ is as in Theorem \ref{improvement}. Let $g$ be a smooth Riemannian metric on $\mathbf R^n$ satisfying the assumptions of Theorem \ref{mainthm} for $r=1$. Also, let $E\subset \mathbf R^n$ be a measurable subset with NMC bounded by $C_0$ in the viscosity sense in $B_1(0)$, for the metric $g$. Moreover, assume $0\in \partial E$ and
	\[\{x^n\le -\sigma\}\cap B_1(0)\subset E\cap B_1(0)\subset \{x^n\le \sigma\}.\]
	Then for any $x\in \partial E\cap \big(B_{1/2}'(0)\times [-\sigma,\sigma]\big)$ we have
	\[\{(z-x)\cdot e_n\le -2\sigma\}\subset  E\subset \{(z-x)\cdot e_n\le  2\sigma\} \ \text{in $B_{1/4}(x)$}.\]
	In turn, for any $j\in \{2,\ldots,k\}$, 
	\[\{(z-x)\cdot e_n\le -2^{-j(1+\alpha)}\}\subset  E\subset \{(z-x)\cdot e_n\le  2^{-j(1+\alpha)}\} \ \text{in $B_{2^{-j}}(x)$}.\]
	Therefore, if $k\ge k_0$ is large enough, then for any $x\in \partial E\cap \big(B_{1/2}'(0)\times [-\sigma,\sigma]\big)$ we can apply Theorem \ref{improvement} to get a sequence of unit vectors $(\nu_j(x))_{j\ge 2}$, with $\nu_j(x)=e_n$ for $2\le j\le k$, such that for all $j\ge 2$,
	\[\{(z-x)\cdot \nu_j(x)\le -2^{-j(\alpha+1)}\}\subset  E\subset \{(z-x)\cdot \nu_j(x)\le  2^{-j(\alpha+1)}\} \ \text{in $B_{2^{-j}}(x)$}.\]
	The inclusions above imply that $(\nu_j(x))$ converges with a geometric rate to a unit vector $\nu(x)$. More precisely, we have $|\nu_j(x)-\nu(x)|\le C 2^{-j\alpha}$ for some universal constant $C$. It follows that
	\[\{(z-x)\cdot \nu(x)\le -C 2^{-j(\alpha+1)}\}\subset  E\subset \{(z-x)\cdot \nu(x)\le C 2^{-j(\alpha+1)}\}~\text{in $B_{2^{-j}}(x)$}.\]
	Then, up to increasing slightly $C$, for any $\rho\in (0,1/4)$,
	\begin{equation}\label{inclusion Brho}\{(z-x)\cdot \nu(x)\le -C \rho^{1+\alpha}\}\subset E\subset \{(z-x)\cdot \nu(x)\le C \rho^{1+\alpha}\}~\text{in $B_\rho(x)$.}\end{equation}
	We infer that for all $x,y$ in $\partial E\cap \big(B_{1/2}'(0)\times [-\sigma,\sigma]\big)$,
	\begin{equation}\label{nu}|\nu(x)-\nu(y)|\le C|x-y|^{\alpha}.\end{equation} 
	If $k$ is large enough then $|e_n-\nu(x)|\le \frac 12$ for all $x\in \partial E\cap \big(B_{1/2}'(0)\times [-\sigma,\sigma]\big)$. It implies with \eqref{inclusion Brho} that $\partial E\cap \big(B_{1/2}'(0)\times [-\sigma,\sigma]\big)$ is the graph of a function $f:B_{1/2}'\to [-\sigma,\sigma]$. We point out that since $\partial E$ is closed, $\partial E\cap B_{1}(0)$ is compact, thus $f$ is automatically continuous. Actually, the above inclusions imply that $f$ is differentiable. Letting $x=(x',f(x'))$ and writing $\nu=(\nu',\nu_n)\in \mathbf R^{n-1}\times \mathbf R$ we have
	\begin{equation}\label{defnu} \nabla_{x'} f= \frac{-\nu'(x)}{\sqrt{1-|\nu'(x)|^2}}.\end{equation}
	Since $|\nu'|\le \frac 12$ in $\partial E\cap \big(B_{1/2}'(0)\times [-\sigma,\sigma]\big)$, we have the gradient upper bound $|\nabla_{x'}f|\le 1$, thus for any $x,y\in \partial E\cap \big(B_{1/2}'(0)\times [-\sigma,\sigma]\big)$, we have 
	\[|x-y|\le |x'-y'|+|f(x')-f(y')|\le \big(1+\|\nabla f\|_{L^\infty(B_{1/2}'(0))}\big)|x'-y'|.\] 
	Eventually, we deduce from \eqref{nu},\eqref{defnu} that
	\[|\nabla_{x'} f-\nabla_{y'} f|\le C |x'-y'|^{\alpha}\]
	in $B_{1/2}'(0)$, where $C$ depends only on $k_0,n,\alpha,s$.
\end{proof}

\section{From $\mathbf R^n$ to arbitrary manifolds}\label{fromto} In this section, we explain how Theorem \ref{maincor} for hypersurfaces of bounded NMC in arbitrary manifolds follows from Theorem \ref{mainthm}. The idea is that even though the NMC is defined by an integral over the whole manifold -- thus is nonlocal in nature --, boundedness of NMC is a very local property, both regarding the set under consideration, and the ambient manifold. 

From now on, we fix an arbitrary smooth, connected, orientable, Riemannian manifold $(M,g)$. In the whole section, we consider fixed constants $0<\eta_0<\rho_0<1<R_0$ such that $2-\rho_0<R_0$. They will be chosen explicitly in the proof of Theorem \ref{maincor}. Given some radius $r$ that may vary, we shall denote $\eta=r\eta_0, \rho=r\rho_0, R=rR_0$. The main result of this section is the following.

\begin{prop} \label{regarderlocalement}  Assume $\mathrm{FA}_1(M,g,p,R,\varphi)$. Consider a measurable subset $E\subset M$. Assume that $E$ has  NMC bounded by $C_0r^{-s}$ in $V_{\eta}(p)=\varphi(B_{\eta}(p))$, in the viscosity sense. Let $F\subset \mathbf R^n$ be defined by $F=\varphi^{-1}(E\cap V_R(p))$. Then there exists a smooth Riemannian metric $h=(h_{ij})$ on $\mathbf R^n$ such that
	\begin{itemize}
		\item $h=\varphi^*g$ in $B_r(0)$.
		\item $\frac 12\le h\le 2$ and $r\|\nabla h_{ij}\|_{L^{\infty}(\mathbf R^n)}\le C$ for some $C=C_{R_0}$. 
		\item $F$ has NMC bounded by $(C_0+C_{n,s})r^{-s}$ in the viscosity sense in $B_\eta(0)$, for the metric $h$.
	\end{itemize}
\end{prop}

In the following, we do not write dependencies of constants with respect to $\eta_0,\rho_0,R_0$, since these will be fixed explicitly.
\begin{proof}[Proposition \ref{regarderlocalement} implies Theorem \ref{maincor}]
	This is a straightforward application of Theorem \ref{mainthm} combined with Proposition \ref{regarderlocalement}. One can choose universal constants $\eta_0=3/4$, $\rho_0=7/8$, $R_0=5/4$ in the above statements (notice that $R_0+\rho_0>2$).
\end{proof}

In the following, if $\Omega$ is an open subset of $(M,g)$, we denote by $H_{\Omega}$ (or $H_{\Omega,g}$) the Dirichlet heat kernel in $\Omega$ (\emph{i.e.} with zero boundary conditions). The parabolic maximum principle implies that $H_{\Omega}(t,p,q)\le H(t,p,q)$ for any $p,q\in \Omega$.

Let us describe the strategy to prove Proposition \ref{regarderlocalement}: first, we provide quantitative estimates on the mass of the heat kernel $H_g(t,p,\cdot)$ outside $V_r(p)$, under some flatness assumption around $p$ (Proposition \ref{intégrale dehors}). Then, we show that when $q\in V_\rho(p)$, the heat kernel $H_g(t,p,q)$ is well approximated at small times by $H_{V_r(p),g}(t,p,q)$ (Proposition \ref{corimportant}). This allows considering kernels depending only on the geometry of $M$ in $V_r(p)$. Next, we map $E\cap V_R(p)$ by $\varphi^{-1}$ to a subset $F\subset B_R(0)\subset \mathbf R^n$. We construct a metric $h$ satisfying the first two items of Proposition \ref{regarderlocalement} by extending smoothly $\varphi^*g$ outside $B_r(0)$. In the coordinates given by $\varphi$, the Dirichlet heat kernel $H_{V_r(p),g}$ is simply given by the Dirichlet heat kernel $H_{B_r(0),h}$. Eventually, by applying Propositions \ref{intégrale dehors} and \ref{corimportant}, we show that $F$ has bounded NMC in the viscosity sense for the metric $h$ in $B_{\eta}(0)$.

\subsection{$L^1$ bounds on the heat kernel under a local flatness assumption}  Although we could try to provide pointwise upper bounds on the heat kernel, since we are dealing with integral equations we only need some $L^1$ bounds.

\begin{prop}[Tail estimates]\label{intégrale dehors}Let $p\in M$. Assume $\mathrm{FA}_1(M,g,p,r,\varphi)$. Then, for any $q\in V_\eta(p)=\varphi(B_{\eta}(0))$ and $t\le r^2$ we have
	\[\int_{M\setminus V_\rho(p)} H(t,q,z)\mathrm dV(z)\le C \mathrm{e}^{-c \frac{r^2}{t}}\]
	and
	\[\label{eqintégraledehors}\int_{M\setminus V_{ \rho}(p)} K(q,z) \mathrm dV(z)\le C r^{-s}.\]
	The constants depend only on $n,s$.\end{prop}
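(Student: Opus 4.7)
The plan is to transport the problem via $\varphi$ to the Euclidean model $(\mathbf R^n,\tilde g)$, where Proposition \ref{FBmetric} supplies Gaussian heat-kernel bounds. First I extend the pullback metric $\varphi^{*}g$, a priori only defined on $B_R(0)$, to a smooth Riemannian metric $\tilde g$ on all of $\mathbf R^n$ satisfying $\tfrac12\le\tilde g\le 2$ (for instance by smoothly interpolating with the Euclidean metric in a thin annulus just outside $B_R(0)$). Proposition \ref{FBmetric} then yields Gaussian upper bounds on the heat kernel $H_{\tilde g}$ with constants depending only on $n$, and $(\mathbf R^n,\tilde g)$ is stochastically complete by Grigor'yan's volume criterion (geodesic balls grow polynomially). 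Since $\varphi:(B_R(0),\tilde g|_{B_R(0)})\to (V_R(p),g)$ is an isometry, the Dirichlet heat kernel $H_{V_R(p)}$ identifies via $\varphi$ with $H_{B_R(0),\tilde g}$ and inherits the same Gaussian bound.

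For $x\in V_\eta(p)$, $\tilde x:=\varphi^{-1}(x)\in B_\eta(0)$, and $t\le r^2$, I would decompose, using $H\ge H_{V_R(p)}$ (domain monotonicity of heat kernels) and $\int_M H(t,x,\cdot)\,\mathrm dV\le 1$:
\[
\int_{M\setminus V_\rho(p)}\!H(t,x,y)\,\mathrm dV(y)\;\le\;\underbrace{\int_{V_R(p)\setminus V_\rho(p)}\!H_{V_R(p)}(t,x,y)\,\mathrm dV(y)}_{A}\;+\;\underbrace{\Big(1-\int_{V_R(p)}\!H_{V_R(p)}(t,x,y)\,\mathrm dV(y)\Big)}_{B}.
\]
The term $A$ is bounded by $Ce^{-cr^2/t}$ by integrating the Gaussian bound on $H_{B_R(0),\tilde g}$, since the $\tilde g$-distance from $\tilde x\in B_\eta(0)$ to any point of $B_R(0)\setminus B_\rho(0)$ is $\gtrsim(\rho_0-\eta_0)r$. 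For $B$, the isometry turns it into $1-\int_{B_R(0)}H_{B_R(0),\tilde g}(t,\tilde x,\cdot)\,\mathrm dV_{\tilde g}$. Stochastic completeness together with the Gaussian tail of $H_{\tilde g}$ gives $\int_{B_R(0)}H_{\tilde g}(t,\tilde x,\cdot)\,\mathrm dV_{\tilde g}\ge 1-Ce^{-cr^2/t}$. To pass from $H_{\tilde g}$ to its Dirichlet counterpart I apply Lemma \ref{parabolic} on $(\mathbf R^n,\tilde g)$ with outer radius $R$ and inner radius $\rho_1:=(R+\rho)/2$, so that $R-\rho_1\gtrsim r$; the right-hand side of Lemma \ref{parabolic} is then itself a Gaussian tail that Proposition \ref{FBmetric} bounds by $Ce^{-cr^2/t}$. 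Adding the remaining annulus contribution $\int_{B_R(0)\setminus B_{\rho_1}(0)}H_{\tilde g}\le Ce^{-cr^2/t}$ (once more a Gaussian tail) yields $\int_{B_R(0)}H_{B_R(0),\tilde g}\ge 1-Ce^{-cr^2/t}$, hence $B\le Ce^{-cr^2/t}$ and the first inequality is proved.

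The second estimate follows by inserting $K(x,y)=\int_0^{+\infty}H(t,x,y)\,t^{-1-s/2}\,\mathrm dt$, applying Fubini, and splitting the $t$-integral at $t=r^2$. For $t\le r^2$ the Gaussian tail above (after the substitution $u=r^2/t$) gives a contribution $\lesssim r^{-s}$; for $t\ge r^2$ the trivial bound $\int_M H(t,x,\cdot)\,\mathrm dV\le 1$ leaves $\int_{r^2}^{+\infty}t^{-1-s/2}\,\mathrm dt\lesssim r^{-s}$.

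The hardest step is the lower bound on $\int_{B_R(0)}H_{B_R(0),\tilde g}$: Lemma \ref{parabolic} only controls $H_{\tilde g}-H_{B_R(0),\tilde g}$ on a strictly smaller ball $B_{\rho_1}(0)$, so bridging the thin annulus $B_R(0)\setminus B_{\rho_1}(0)$ by a separate Gaussian tail is the step that forces the hypothesis $R_0>2-\rho_0$ (equivalently, $R-\rho\gtrsim r$).
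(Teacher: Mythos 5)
Your strategy is sound in outline but contains a radius mismatch that as written makes the key objects undefined. The hypothesis here is $\mathrm{FA}(M,g,p,r,\varphi)$, so $\varphi$ is a chart on $B_r(0)$ only; the set $V_R(p)=\varphi(B_R(0))$ with $R=rR_0>r$, and hence the Dirichlet heat kernel $H_{V_R(p)}$, are simply not available. Consequently your opening inequality
\[
\int_{M\setminus V_\rho(p)}\!H\;\le\;\int_{V_R(p)\setminus V_\rho(p)}\!H_{V_R(p)}\;+\;\Bigl(1-\int_{V_R(p)}\!H_{V_R(p)}\Bigr)
\]
and the identification of $H_{V_R(p)}$ with $H_{B_R(0),\tilde g}$ via $\varphi$ cannot be invoked. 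This is precisely why the paper's Proposition~\ref{corimportant}, which does compare $H$ to a Dirichlet kernel on $V_r(p)$ through Lemma~\ref{parabolic}, strengthens the hypothesis to $\mathrm{FA}(M,g,p,R,\varphi)$: one needs the chart to extend past the Dirichlet domain. Your final remark, that the argument ``forces $R_0>2-\rho_0$,'' confirms you are silently using this stronger flatness, which Proposition~\ref{intégrale dehors} does not grant.

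The fix is local: replace $V_R(p)$ by $V_r(p)$ throughout and take $\rho_1:=(r+\rho)/2\in(\rho,r)$ as the inner radius when applying the parabolic maximum principle on $(\mathbf R^n,\tilde g)$ with Dirichlet domain $B_r(0)$. Since $\rho-\eta$, $\rho_1-\eta$ and $r-\rho_1$ are all comparable to $r$ (with $\eta_0=3/4$, $\rho_0=7/8$), every tail you need is still $\lesssim e^{-cr^2/t}$, so $A\lesssim e^{-cr^2/t}$ and $B=1-\int_{B_r(0)}H_{B_r(0),\tilde g}\lesssim e^{-cr^2/t}$ by stochastic completeness of $(\mathbf R^n,\tilde g)$, the Gaussian tail of $H_{\tilde g}$, and Lemma~\ref{parabolic}; the $K$-estimate by splitting the $t$-integral at $r^2$ is then identical to the paper's. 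Once corrected, your argument is a genuinely different route. The paper does not decompose into $A+B$ nor invoke stochastic completeness or Lemma~\ref{parabolic} at this stage: it introduces a cutoff $\chi$ equal to $1$ on $V_\delta(p)$ and $0$ outside $V_\rho(p)$ with $|\Delta_g\chi|\lesssim r^{-2}$, sets $u=e^{t\Delta_{V_r(p)}}\chi$, computes $\partial_t u=\int \Delta_g\chi\, H_{V_r(p)}$ by Green's formula, integrates in time, and plays the resulting lower bound on $u(t,x)$ for $x\in V_\eta(p)$ against the upper bound $u(t,x)\le 1-\int_{M\setminus V_\rho(p)}H$, which requires only the Gaussian estimate for $H_{V_r(p)}$. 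Your version needs more machinery (stochastic completeness of the extended model, domain monotonicity, Lemma~\ref{parabolic}) but is otherwise legitimate and arguably more transparent about how mass escapes the ball.
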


\begin{rem}Of course, the constants depend on $\eta_0$ and $\rho_0$, but these will be explicitly fixed. \end{rem}
\begin{rem}We stress out that we make no assumption at all on the geometry of the manifold outside $V_r(p)=\varphi(B_r(0))$. \end{rem}
\begin{proof} Let $\Delta_{V_r(p)}$ denote the Dirichlet Laplacian in $V_r(p)$, and let $H_{V_r(p)}$ denote the associated heat kernel.
	
	\textit{1.} We claim that for any $(q,z)\in V_r(p)$ we have
	\begin{equation}\label{heatcompact}H_{V_r(p),g}(t,q,z)\le C t^{-n/2}\mathrm{e}^{-\frac{c d(q,z)^2}{t}}. \end{equation}
	for some small universal constant $c>0$ and $C=C_{c,n}$. Indeed, since $M$ satisfies $\mathrm{FA}_1(M,g,p,r,\varphi)$, we can map $V_r(p)$ quasi-isometrically by $\varphi^{-1}$ to $B_r(0)\subset \mathbf R^n$, then extend smoothly $\varphi^*g$ to a metric $h$ on $\mathbf R^n$ satisfying $\frac 12\le h\le 2$. Then, for any $x,y\in B_r(0)$ we have \[H_{V_r(p),g}(t,\varphi(x),\varphi(y))=H_{B_r(0),h}(t,x,y).\]
	Now we have $H_{B_r(0),h}\le H_h$, and we can use Proposition \ref{FBmetric} to obtain Gaussian upper bounds on $H_h$. Combined with the fact that $\varphi$ is a quasi-isometry $B_r(0)\to V_r(p)$, this gives \eqref{heatcompact}.
	
	\medskip
	
	\textit{2.} Take $\eta<\delta<\rho$, typically $\delta=\frac{\eta+\rho}{2}$, and a cutoff function $\chi$ such that $\chi\equiv 1$ in $V_\delta(p)$ and $\chi\equiv 0$ outside $V_\rho(p)$. We can ask for the second derivative bound $|\Delta_g\chi|\le C r^{-2}$ with $C$ depending only on $n,\eta_0,\rho_0$. Let 
	\begin{equation}\label{defu}u(t,q):= \mathrm{e}^{t\Delta_{V_r(p)}} \chi(q)=\int_{V_r(p)} H_{V_r(p)}(t,q,z)\chi(z) \mathrm dV(z).\end{equation} Then, differentiating with respect to the time variable,
	\[\partial_t u(t,q)=\int_{V_r(p)} \chi(z)\partial_t H_{V_r(p)}(t,q,z)\mathrm dV(z).\]
	Now, since $\partial_t H_{V_r(p)}(t,q,z)=\Delta_g H_{V_r(p)}(t,q,z)$ and $\chi$ vanishes on $V_r(p)\setminus V_{\rho}(p)$, we obtain by Green's formula,
	\[\partial_t u(t,q)=\int_{V_r(p)} \Delta_g\chi(z)H_{V_r(p)}(t,q,z)\mathrm dV(z).\]
	Integrating over $\tau\in[0,t]$, noticing meanwhile that $\Delta_g \chi$ is supported in $\overline{ V_\rho(p)} \setminus V_{\delta}(p)$, we get that for any $q\in V_r(p)$,
	\begin{equation}\label{eq: intégration temps}u(t,q)-u(0,q)=\int_0^t \int_{V_\rho(p)\setminus V_\delta(p)} \Delta_g \chi(z)H_{V_r(p)}(\tau,q,z)\mathrm dV(z)\mathrm d\tau.\end{equation}
	Assume $q\in V_\delta(p)$. On the one hand $u(0,q)=\chi(q)=1$, and it follows from \eqref{eq: intégration temps} that
	\begin{equation}\label{ineq1} u(t,q)\ge 1-\|\Delta_g \chi\|_{L^{\infty}(V_\rho(p)\setminus V_{\delta}(p))}\int_0^t \int_{V_\rho(p)\setminus V_{\delta}(p)}H_{V_r(p)}(\tau,q,z)\mathrm dV(z)\mathrm d\tau.\end{equation}
	On the other hand $\chi$ is supported in $\overline{V_\rho(p)}$, and $H_{V_r(p)}\le H$. Since $H$ has total mass $\le 1$, using expression \eqref{defu} we have
	\begin{equation}\label{ineq2} u(t,q)\le 1-\int_{M\setminus V_\rho(p)} H(t,q,z)\mathrm dV(z).\end{equation}
	Thus, for $q\in V_{\delta}(p)$, combining (\ref{ineq1}) and (\ref{ineq2}),
	\[\int_{M\setminus V_\rho(p)} H(t,q,z)\mathrm dV(z)\le \|\Delta_g \chi\|_{L^{\infty}(V_\rho(p)\setminus V_{\delta}(p))}\int_0^t \int_{V_{\rho}(p)\setminus V_{\delta}(p)}H_{V_r(p)}(\tau,q,z) \mathrm dV(z)\mathrm d\tau. \]
	If additionally $q\in V_{\eta}(p)$, then for any $z\in V_\rho(p)\backslash V_\delta(p)$ we have 
	\[d(q,z)\ge \frac 12 (\rho-\eta)\ge \frac{\rho_0-\eta_0}{2}r.\]
	By using \eqref{heatcompact}, the bound $|V_\rho(p)|\le Cr^n$ (recall that $\varphi$ is a quasi-isometry) and the bound on $|\Delta_g \chi|$ we infer
	\begin{equation}\label{intinBr} \int_{M\setminus V_{\rho}(p)} H(t,q,z)\mathrm dV(z)\le C_n r^{n}\int_0^t \tau^{-n/2}\mathrm{e}^{-cr^2/\tau}\mathrm d\frac{\tau}{r^2}\le C_n \int_0^{t/r^2} u^{-\frac n2}  \mathrm{e}^{-c/u}\mathrm du,\end{equation}
	where $c$ has been taken smaller than in \eqref{heatcompact}. If $t\le r^2$ then the integral in the last line is smaller than $C_n\mathrm{e}^{-cr^2/t}$ for some smaller $c>0$, implying the result. Actually, since the heat kernel has mass $\le 1$, the integral bound holds for any $t$.

	\textit{3.} To show the integral bound on $K(q,\cdot)$, just use \eqref{intinBr} to get
	\[ \int_{\mathbf R_+} \frac{\mathrm dt}{t^{1+s/2}}\displaystyle\int_{M\setminus V_{\rho}(p)}H(t,q,z) \mathrm dV(z) \le C_n \displaystyle\int_{0}^{+\infty} \mathrm{e}^{-cr^2/t}\frac{\mathrm dt}{t^{1+s/2}}\le C_{n,s} r^{-s}.\]
\end{proof}
\begin{rem} To define the cutoff function $\chi$, we start from a cutoff function $\tilde \chi$ on $\mathbf R^n$ such that $\tilde \chi\equiv 1$ in $B_\delta(0)$ and $\tilde \chi\equiv 0$ outside $B_\rho(0)$, satisfying $\|\mathrm D^2\tilde \chi\|_{L^\infty(\mathbf R^n)}\le Cr^{-2}$. We set $\chi=\varphi_*\tilde \chi$ in $V_r(p)$ and extend it by $0$ on $M$. Then, $\Delta_g \chi=\varphi_*(\Delta_{\varphi*g} \tilde \chi)$, implying $\|\Delta_g \chi\|_{L^\infty(M)}=\|\Delta_{\varphi^*g} \tilde \chi\|_{L^\infty(\mathbf R^n)}$, and by the flatness assumption we have \[|\Delta_{\varphi^*g} \tilde \chi|\le C_n r^{-1}\|\mathrm D \tilde \chi\|+ C_n\|\mathrm D^2\tilde \chi\|\le C_nr^{-2}.\]
\end{rem}
With Proposition \ref{intégrale dehors} at hand, we can focus on the contribution to $\mathrm H_s[E](p)$ of points living in $V_\rho(p)$. Under a flatness assumption at scale $r$ around $p$, up an error $\le C r^{-s}$ this contribution does not depend on the geometry of $M$ outside $V_r(p)$, see Proposition \ref{corimportant} below.

The following lemma relies on the parabolic maximum principle. It allows replacing the heat kernel $H(t,q,z)$ by the Dirichlet heat kernel $H_{V_r(p)}(t,q,z)$, that depends only on the geometry of $M$ in $V_r(p)$.
\begin{lem}\label{parabolic} Let $p\in M$ and let $\varphi:B_R(0)\to V_R(p)$ be a smooth diffeomorphism. Then,
	\[\sup_{q\in V_r(p)} \int_{V_\rho(p)}\big(H(t,q,z)-H_{V_r(p)}(t,q,z)\big) \mathrm dV(z)\le \sup_{(q,\tau)\in \partial V_r(p)\times [0,t]} \int_{M\setminus V_{r-\rho}(q)} H(\tau,q,z) \mathrm dV(z).\]
\end{lem}
\begin{proof}Let $u_0$ denote the indicator function of $V_{\rho}(p)$ and consider
	\[u(t,q):=(\mathrm{e}^{t\Delta}u_0-\mathrm{e}^{t\Delta_{V_r(p)}}u_0)(q)=\int_{M} (H(t,q,z)-H_{V_r(p)}(t,q,z))u_0(z) \mathrm dV(z).\]
	We have $(\partial_t-\Delta_g)u=0$, and $u(0,\cdot)=0$ in $V_r(p)$. Since $u$ is nonnegative everywhere (because $H_{V_r(p)}\le H$), we deduce from the parabolic maximum principle that for any $q\in V_r(p)$ and $t\ge 0$,
	\begin{equation}\label{apply pmp}u(t,q)\le \sup_{(w,\tau)\in \partial V_r(p)\times [0,t]} u(\tau,w)=\sup_{(w,\tau)\in \partial V_r(p)\times [0,t]} \mathrm{e}^{\tau\Delta}u_0(w),\end{equation}
	where we have used in the second equality the fact that $\mathrm{e}^{t\Delta_{V_r(p)}}u_0$ vanishes on $\partial V_r(p)$. Now, for any $w\in \partial V_r(p)$, we have $V_{r-\rho}(w)\subset M\backslash V_\rho(p)$, whence
	\[\mathrm{e}^{\tau \Delta_g} u_0(w)=\int_{V_{\rho}(p)} H(\tau,w,z) \mathrm dV(z)\le \int_{M\setminus V_{r-\rho}(w)} H(\tau,w,z) \mathrm dV(z).\]
	Combined with \eqref{apply pmp} this gives the result. We point out that we need $R>2r-\rho$ in order to make sense of $V_{r-\rho}(q)$ when $q\in \partial V_r(p)$.\end{proof}

\begin{rem}We can give a probabilistic interpretation of the result. Consider a Brownian motion $X_t$ starting at $q\in V_r(p)$, and let $\tau:=\inf \{t>0, \  X_t \in \partial V_r\}$. Then for $t>0$ we have
	\[\{X_t\in V_\rho\}=\{\tau>t \ \text{and } X_t\in V_\rho\}\sqcup \{\tau\le t\ \text{and } X_{t-\tau}\in V_\rho\}.\]
	It means that if a Brownian particle starting from $q\in V_r$ is in $V_\rho$ at time $t$, then either it has remained in $V_r$ during the whole trajectory, either the particle has hit $\partial V_r$ at some instant $\tau\le t$ and came back to $V_\rho$ in time $t-\tau$. This  translates to \eqref{apply pmp}.
	
\end{rem}
Thus, we are left to estimating the integrals $\int_{M\setminus V_{r-\rho}(z)} H(\tau,w,z) \mathrm dV(z)$, which is made possible by Proposition \ref{intégrale dehors}.

\begin{prop}\label{corimportant} Assume $\mathrm{FA}_1(M,g,p,R,\varphi)$. Then, for any $q\in V_{r}(p)$ and $t>0$,
	\[ \int_{V_\rho(p)}\big(H(t,q,z)-H_{V_r(p)}(t,q,z)\big) \mathrm dV(z)\le C \mathrm{e}^{-c \frac{r^2}{t}}.\]
	Moreover, denoting $K_{V_{r}(p)}(q,z):=\int \frac{\mathrm dt}{t^{1+s/2}} H_{V_r(p)}(t,q,z)\mathrm dt$, we have
	\[\int_{V_{\rho}(p)} |K(q,z)-K_{V_r(p)}(q,z)| \ \mathrm dV(z)\le Cr^{-s}.\]
	The constants depend only on $n$ and $s$. \end{prop}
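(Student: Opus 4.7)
The plan is to derive the first estimate directly from the parabolic comparison in Lemma~\ref{parabolic} combined with the tail bound of Proposition~\ref{intégrale dehors} applied at boundary points, and then to obtain the kernel estimate by integrating the first one against $t^{-1-s/2}$ and splitting the time integral at $t=r^2$.

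For the first estimate, I apply Lemma~\ref{parabolic} with the given chart $\varphi$; this reduces the task to bounding
\[\sup_{(z,\tau)\in\partial V_r(p)\times[0,t]} \int_{M\setminus V_{r-\rho}(z)} H(\tau,z,y)\,\mathrm dV(y).\]
For each $z\in\partial V_r(p)$ I shift the flatness chart by setting $\psi_z(w):=\varphi(\varphi^{-1}(z)+w)$ on $B_{R-r}(0)$. The containment $B_{R-r}(\varphi^{-1}(z))\subset B_R(0)$ is exactly the hypothesis $2-\rho_0<R_0$ (equivalently $2r-\rho\le R$), so $\psi_z$ is well-defined; since it is a rigid translation of $\varphi$ in parameter space, it inherits $\mathrm{FA}(M,g,z,R-r,\psi_z)$ with the same constants. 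I then invoke Proposition~\ref{intégrale dehors} centered at $z$ with outer radius $R-r$ and inner radius $r-\rho$; the admissibility condition $(r-\rho)/(R-r)=(1-\rho_0)/(R_0-1)<1$ is again provided by $2-\rho_0<R_0$. This yields $\int_{M\setminus V_{r-\rho}(z)}H(\tau,z,y)\,\mathrm dV(y)\le Ce^{-c(r-\rho)^2/\tau}$ for $\tau\le (R-r)^2$. Since the exponential is increasing in $\tau$ and $r-\rho=(1-\rho_0)r$, the supremum over the admissible range absorbs into $Ce^{-cr^2/t}$; the remaining regime $\tau\in((R-r)^2,t]$ (empty when $R_0\ge 2$) is handled by the trivial bound $\le 1$, which is itself dominated by $Ce^{-cr^2/t}$ on $t\le r^2$ since $e^{-cr^2/t}\ge e^{-c}$ there.

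For the kernel estimate, the parabolic maximum principle gives $H_{V_r(p)}\le H$ pointwise, hence $K_{V_r(p)}\le K$ and $|K-K_{V_r(p)}|=K-K_{V_r(p)}$. By Fubini,
\[\int_{V_\rho(p)}(K(x,y)-K_{V_r(p)}(x,y))\,\mathrm dV(y)=\int_0^{+\infty}\frac{\mathrm dt}{t^{1+s/2}}\int_{V_\rho(p)}(H(t,x,y)-H_{V_r(p)}(t,x,y))\,\mathrm dV(y).\]
I split the $t$-integral at $r^2$: on $[0,r^2]$ the first part of the proposition furnishes the Gaussian bound, and $\int_0^{r^2} t^{-1-s/2}e^{-cr^2/t}\,\mathrm dt\lesssim r^{-s}$ by the standard substitution $u=r^2/t$; on $[r^2,+\infty)$ the inner integral is at most $1$ and $\int_{r^2}^{+\infty} t^{-1-s/2}\,\mathrm dt=\tfrac 2s r^{-s}$. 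The only genuine difficulty is bookkeeping the four scales $\eta,\rho,r,R$ under the chart translation, and checking that the hypothesis $2-\rho_0<R_0$ is exactly what guarantees both that $\psi_z$ still sits inside the domain of $\varphi$ and that its flatness scale $R-r$ strictly exceeds the target scale $r-\rho$ so Proposition~\ref{intégrale dehors} applies.
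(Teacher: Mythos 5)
Your proof is correct and follows the paper's argument: reduce via Lemma~\ref{parabolic} to a tail estimate at boundary points, apply Proposition~\ref{intégrale dehors} after translating the chart to $z\in\partial V_r(p)$, and split the time integral for the kernel bound at $t=r^2$ using $H_{V_r(p)}\le H$ and mass $\le 1$ on the far range. One small misattribution: the containment $B_{R-r}(\varphi^{-1}(z))\subset B_R(0)$ is automatic from $|\varphi^{-1}(z)|=r$ and the triangle inequality, not from $2-\rho_0<R_0$; that hypothesis is what guarantees $r-\rho<R-r$ (needed both for Lemma~\ref{parabolic} to make sense and for the shifted chart to cover the tail region), which you do invoke correctly in the admissibility step.
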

\begin{proof}
	Recall that by Lemma \ref{parabolic},
	\begin{equation}\label{parabolic2}\sup_{q\in V_{r}(p)} \int_{V_{\rho}(p)}(H(t,q,z)-H_{V_r(p)}(t,q,z)) \mathrm dV(z)\le \sup_{(q,\tau)\in \partial V_r(p)\times [0,t]} \int_{M\setminus V_{r-\rho}(q)} H(\tau,q,z) \mathrm dV(z).\end{equation}
	Since $\mathrm{FA}_1(M,g,p,R,\varphi)$ is satisfied, for any $q\in \partial V_r(p)$ we have $\mathrm{FA}_1\big(M,g,q,R-r,\varphi(\varphi^{-1}(q)+\cdot)\big)$. Now, $r-\rho=\frac{1-\rho_0}{R_0-1}(R-r)$ with $\frac{1-\rho_0}{R_0-1}<1$, thus we can apply Proposition \ref{intégrale dehors}, giving for any $\tau \in [0,t]$:
	\[\int_{M\setminus V_{r-\rho}(q)} H(\tau,q,z) \mathrm dV(z)\le C_n \mathrm{e}^{-c (r-\rho)^2/\tau}\le C_n\mathrm{e}^{-cr^2/t}.\]
	where $c$ has been taken smaller in the last inequality. The integral bound on $|K(q,\cdot)-K_{V_r(p)}(q,\cdot)|$ follows by performing an integration on $\mathbf R_+$ against $\mathrm dt/t^{1+s/2}$.
\end{proof}

\subsection{Nonlocal mean curvature of subsets of arbitrary manifolds} \label{subsec: NMC bis}

We prove the general version of Proposition \ref{prop: NMC bien définie}, then show the equivalence of the two definitions of NMC boundedness. Recall that $\mathrm H_s[E]$ has been defined in \eqref{eq: def NMC}.

\begin{prop}\label{prop: NMC bien définie bis}Let $(M,g)$ be a smooth Riemannian manifold and consider a measurable subset $E\subset M$. Assume that $E$ has an interior (resp. exterior) tangent ball at $p\in\partial E$. Then $\mathrm H_s[E](p)$ is well-defined, with value in $(-\infty,+\infty]$ (resp. in $[-\infty,+\infty)$).\end{prop}
\begin{proof} Assume that $E$ has an interior tangent ball at $p\in \partial E$. We translate the problem to $\mathbf R^n$ in order to invoke Proposition \ref{prop: NMC bien définie}. Fix $r>0$ such that $\mathrm{FA}_1(M,g,p,r,\varphi)$ holds, for some smooth diffeomorphism $\varphi$. By Propositions \ref{intégrale dehors} and \ref{corimportant}, $\int_{M\backslash V_\varepsilon(p)} (\chi_E-\chi_{\mathcal CE})(q)K_g(p,q)\mathrm dV_g(q)$ converges as $\varepsilon\to 0$ if and only if 
	\[\int_{V_r(p)\backslash V_\varepsilon(p)} (\chi_E-\chi_{\mathcal CE})(q)K_{V_r(p),g}(p,q)\mathrm dV_g(q)\]
	converges. Let $F\subset \mathbf R^n$ be defined by $F=\varphi^{-1}(E\cap V_R(p))$, and let $h$ be a smooth, compact perturbation of the Euclidean metric, with $h=\varphi^*g$ in $B_r(0)$. Then the integral above equals 
	\begin{equation}\label{preuve NMC var} \int_{B_r(0)\backslash B_\varepsilon(0)}(\chi_F-\chi_{\mathcal CF})(x) K_{{B_r(0)},h}(0,x)\mathrm dV_h(x).\end{equation}
	
	The convergence of \eqref{preuve NMC var} as $\varepsilon\to 0$ is equivalent to that of $\int_{B_r(0)\backslash B_\varepsilon(0)} (\chi_F-\chi_{\mathcal CF})(x) K_{h}(0,x)\mathrm dV_h(x)$, which is clear by Proposition \ref{prop: NMC bien définie}, since $F$ has an interior tangent ball at $0$.
	
\end{proof}
\begin{rem}One may argue that the shrinking sets $V_\varepsilon(p)$ are not metric balls centered at $p$. However, recalling Remark \ref{Remarque shrinking balls}, one can show that the principal value does not depend on the choice of $\varphi$ in the proof, thus we may take $\varphi$ to be the exponential map $B_r(0)\to B_r^g(p)$ for $r$ small enough, in which case we have indeed $V_\varepsilon(p)=B_\varepsilon^g(p)$.\end{rem}

We now turn to the proof of Proposition \ref{prop: equiv def NMC} asserting the equivalence of the two definitions of NMC boundedness. 
\begin{proof}[Proof of Proposition \ref{prop: equiv def NMC}] One implication is clear because $F\subset E$ implies $\chi_F-\chi_{\mathcal CF}\le \chi_E-\chi_{\mathcal CE}$ thus $\mathrm{H}_s[F](y)\le \mathrm{H}_s[E](y)$ whenever $y\in \partial F\cap \partial E$.

	Let us now prove the converse implication. Assume $E$ has an interior tangent ball at $p\in \partial E$. This implies by Proposition \ref{prop: NMC bien définie bis} that $E$ has well-defined NMC at $p$ and $\mathrm{H}_s[E](p)\in (-\infty,+\infty]$. By very definition, we can find a diffeomorphism $\psi:B_1(0)\to V\subset M$ with $\psi(0)=p$ and $V^+:=\psi(B_1^+)\subset E$. For $\varepsilon>0$, let $\psi_\varepsilon:B_\varepsilon(0)\to V_\varepsilon$ be the restriction of $\psi$ to $B_\varepsilon(0)$. Letting $F_\varepsilon:=V_\varepsilon^+\cup (E\backslash V_\varepsilon)$, by assumption, we have $\mathrm{H}_s[F_\varepsilon]\le C_0$. Now,
	\[\mathrm{H}_s[F_\varepsilon](p)=\int_{V_\varepsilon(p)} (\chi_{V^+}-\chi_{\mathcal CV^+})(q)K(p,q)\mathrm dV(q)+\int_{M\backslash V_\varepsilon(p)} (\chi_E-\chi_{\mathcal C E})(q)K(p,q)\mathrm dV(q).\]
	Since $\partial V^+$ is smooth in a neighborhood of $p$, the first term of the right-hand side goes to $0$ as $\varepsilon\to 0$, while the second term converges to $\mathrm{H_s}[E](p)\in (-\infty,+\infty]$. It immediately follows that $\mathrm{H}_s[E](p)\le C_0$, as we wished.
\end{proof}

\subsection{Proof of Proposition \ref{regarderlocalement}}

Here, we combine the results of the previous subsections to prove Proposition \ref{regarderlocalement}. Recall that $\eta<\rho<r<R$, and $2r-\rho<R$, where $\eta,\rho,R$ are multiples of $r$ by explicit constants, that will be fixed. The proof is essentially the same as that of Proposition \ref{prop: NMC bien définie bis}, though here we have to be more careful in order to provide quantitative estimates.

\begin{proof} Assume $F$ has an interior tangent ball at a point $y\in B_\eta(0)$. Then, $E$ has an interior tangent ball at $w=\varphi(y)\in V_\eta(p)$, and the NMC boundedness assumption on $E$ reads
	\[\mathrm{p.v.}\int_M (\chi_E-\chi_{\mathcal CE})(q)K_g(w,q) \mathrm dV_g(q)\le C_0 r^{-s}\]
	By the tail estimate from Proposition \ref{intégrale dehors} we infer
	\[\mathrm{p.v.}\int_{V_\rho(p)} (\chi_E-\chi_{\mathcal CE})(q)K_g(w,q) \mathrm dV_g(q)\le (C_0+C_{n,s})r^{-s}.\]
	Then Proposition \ref{corimportant} shows that we can replace $K_g$ by $K_{V_r(p),g}$, only adding an integrable term, giving
	\[\mathrm{p.v.}\int_{V_\rho(p)} (\chi_E-\chi_{\mathcal CE})(q)K_{V_r(p),g}(w,q) \mathrm dV_g(q)\le (C_0+C_{n,s})r^{-s},\]
	where the constant $C_{n,s}$ has increased. Since $K_{\varphi(B_r(0)),g}(t,w,q)=K_{B_r(0),\varphi^*g}(t,\varphi^{-1}(w),\varphi^{-1}(q))$, after a change of variables we obtain,
	\begin{equation}\label{alsoreads}\mathrm{p.v.}\int_{B_\rho(0)} (\chi_F-\chi_{\mathcal CF})(x) K_{B_r(0),\varphi^*g}(x, y)\mathrm dV_{\varphi^*g}(x)\le (C_0+C_{n,s}) r^{-s}.\end{equation}
	Take a cutoff function $\chi$ on $\mathbf R^n$ such that $\chi\equiv 1$ in $B_r(0)$ and $\chi\equiv 0$ outside $B_{R}(0)$, with $\|\nabla \chi\|_{L^{\infty}}\le \frac{C}{R_0-1} r^{-1}$.
	Define a metric $h$ on $\mathbf R^n$ by $h=\chi(\varphi^*g)+(1-\chi)\mathrm{Id}$. Then $h$ clearly checks the claimed properties, and \eqref{alsoreads} equivalently reads
	\[\mathrm{p.v.}\int_{B_\rho(0)} (\chi_F-\chi_{\mathcal CF})(x) K_{B_r(0),h}(x, y)\mathrm dV_{h}(x)\le (C_0+C_{n,s})r^{-s}.\]
	Now we apply Proposition \ref{corimportant} and the tail estimate for $K_{h}$ backwards to obtain
	\[\mathrm{p.v.}\int_{\mathbf R^n} (\chi_F-\chi_{\mathcal CF})(x) K_{h}(x,y)\mathrm dV_{h}(x)\le (C_0+C_{n,s})r^{-s},\]
	where again the constants may have changed. The reasoning is the same if $F$ has an exterior tangent ball at $y$.
\end{proof}







\printbibliography

\end{document}